\theoremstyle{plain}
\newtheorem{theorem}{Theorem}
\newtheorem*{theorema}{Theorem A}
\newtheorem*{theoremb}{Theorem B}
\newtheorem{proposition}[theorem]{Proposition}
\newtheorem{corollary}[theorem]{Corollary}
\newtheorem{lemma}[theorem]{Lemma}
\numberwithin{theorem}{section}
\theoremstyle{definition}
\newtheorem{definition}[theorem]{Definition}
\newtheorem{remark}[theorem]{Remark}
\newcommand{\deq}{\:=\:}
\newcommand{\setof}[3][\mspace{1mu}]{\{#1#2 \mid #3#1\}}
\newcommand{\ZZ}{\mathbb{Z}}
\newcommand{\Spec}[1]{\operatorname{Spec}#1}
\newcommand{\Cy}[2][]{\operatorname{Z}^{#1}(#2)}
\newcommand{\lra}{\longrightarrow}
\newcommand{\is}{\cong}
\newcommand{\tp}[3][R]{\nobreak{#2\otimes_{#1}#3}}
\newcommand{\calC}{\mathcal{C}}
\newcommand{\calF}{\mathcal{F}}
\newcommand{\calQ}{\mathcal{Q}}
\newcommand{\calR}{\mathcal{R}}
\newcommand{\calS}{\mathcal{S}}
\newcommand{\calG}{\mathcal{G}}
\newcommand{\calW}{\mathcal{W}}
\newcommand{\calU}{\mathcal{U}}
\newcommand{\catFlatTilde}{\mathsf{C_\textnormal{ac}^\textnormal{Z}}(\FlatX)}
\newcommand{\catCotTilde}{\mathsf{C_\textnormal{ac}^\textnormal{Z}}(\catCot{X})}
\newcommand{\Ch}[1]{\mathsf{C}(#1)}
\newcommand{\Homab}[2]{\operatorname{Hom}(#1,#2)}
\newcommand{\Homqc}[2]{{\mathscr Hom}_{{\rm qc}}(#1,#2)}
\newcommand{\cathom}[3][]{\operatorname{hom}_{#1}(#2,#3)}
\newcommand{\Ext}[4]{\operatorname{Ext}^{#1}_{#2}(#3,#4)}
\newcommand{\Tor}[4]{\operatorname{Tor}_{#1}^{#2}(#3,#4)}
\renewcommand{\H}{\operatorname{H}}
\newcommand{\A}{\mathscr{A}}
\newcommand{\B}{\mathscr{B}}
\newcommand{\E}{\mathscr{E}}
\newcommand{\F}{\mathscr{F}}
\newcommand{\G}{\mathscr{G}}
\newcommand{\K}{\mathscr{K}}
\newcommand{\M}{\mathscr{M}}
\newcommand{\I}{\mathscr{I}}
\newcommand{\N}{\mathscr{N}}
\newcommand{\T}{\mathscr{T}}
\newcommand{\X}{\mathscr{X}}
\newcommand{\C}{\mathscr{C}}
\newcommand{\LL}{\mathscr{L}}
\renewcommand{\P}{\mathscr{P}}
\newcommand{\R}{\mathcal{O}_X}
\newcommand{\Rx}{\mathcal{O}_{X,x}}
\newcommand{\Qcoh}[1]{\mathsf{Qcoh}{(#1)}}
\newcommand{\Ftac}{\textnormal{\bf F}-totally acyclic}
\newcommand{\catK}[1]{\mathsf{K}(#1)}
\newcommand{\catKtac}[1]{\mathsf{K}_\textnormal{tac}(#1)}
\newcommand{\catKFtac}[1]{\mathsf{K}_\textnormal{F-tac}(#1)}
\newcommand{\catKpure}[1]{\mathsf{K}_\textnormal{pac}(#1)}
\newcommand{\catD}[1]{\mathsf{D}(#1)}
\newcommand{\catDFtac}[1]{\mathsf{D}_\textnormal{F-tac}(#1)}
\newcommand{\catGF}[1]{\mathsf{GFlat}({#1})}
\newcommand{\catGFC}[1]{\mathsf{GFC}({#1})}
\newcommand{\catdgCot}[1]{\mathsf{C_\textnormal{semi}}(\catCot{#1})}
\newcommand{\catCot}[1]{\mathsf{Cot}(#1)}
\newcommand{\catFlat}[1]{\mathsf{Flat}(#1)}
\newcommand{\FlatX}{\catFlat{X}}
\newcommand{\catprj}[1]{\mathsf{prj}(#1)}
\newcommand{\catPrj}[1]{\mathsf{Prj}(#1)}
\newcommand{\stabcatGFC}[1]{\mathsf{St}\catGFC{#1}}
\def\urltilda{\kern -.15em\lower .7ex\hbox{\~{}}\kern .04em}
\def\soft#1{\leavevmode\setbox0=\hbox{h}\dimen7=\ht0\advance \dimen7
  by-1ex\relax\if t#1\relax\rlap{\raise.6\dimen7
  \hbox{\kern.3ex\char'47}}#1\relax\else\if T#1\relax
  \rlap{\raise.5\dimen7\hbox{\kern1.3ex\char'47}}#1\relax \else\if
  d#1\relax\rlap{\raise.5\dimen7\hbox{\kern.9ex \char'47}}#1\relax\else\if
  D#1\relax\rlap{\raise.5\dimen7 \hbox{\kern1.4ex\char'47}}#1\relax\else\if
  l#1\relax \rlap{\raise.5\dimen7\hbox{\kern.4ex\char'47}}#1\relax \else\if
  L#1\relax\rlap{\raise.5\dimen7\hbox{\kern.7ex
  \char'47}}#1\relax\else\message{accent \string\soft \space #1 not
  defined!}#1\relax\fi\fi\fi\fi\fi\fi}
\begin{document}

\title[The stable category of Gorenstein flat sheaves]{The stable
  category of Gorenstein flat sheaves on a noetherian scheme}

\author[L.W. Christensen]{Lars Winther Christensen}

\address{L.W.C. \ Texas Tech University, Lubbock, TX 79409, U.S.A.}

\email{lars.w.christensen@ttu.edu}

\urladdr{http://www.math.ttu.edu/\urltilda lchriste}

\author[S. Estrada]{Sergio Estrada}

\address{S.E. \ Universidad de Murcia, Murcia 30100, Spain}

\email{sestrada@um.es}

\urladdr{https://webs.um.es/sestrada/}

\author[P.\ Thompson]{Peder Thompson}

\address{P.T. \ Norwegian University of Science and Technology, 7491
  Trondheim, Norway}

\email{peder.thompson@ntnu.no}

\urladdr{https://folk.ntnu.no/pedertho}

\thanks{L.W.C.\ was partly supported by Simons Foundation
  collaboration grant 428308. S.E.\ was partly supported by grants
  PRX18/00057, MTM2016-77445-P, and 19880/GERM/15 by the Fundaci\'on
  S\'eneca-Agencia de Ciencia y Tecnolog\'{\i}a de la Regi\'on
  de~Murcia and FEDER funds}

\date{14 July 2020}

\keywords{Cotorsion sheaf, Gorenstein flat sheaf, noetherian scheme,
  stable category, totally acyclic complex}

\subjclass[2010]{14F08; 18G35}

\begin{abstract}
  For a semi-separated noetherian scheme, we show that the category of
  cotorsion Gorenstein flat quasi-coherent sheaves is Frobenius and a
  natural non-affine analogue of the category of Gorenstein projective
  modules over a noetherian ring. We show that this coheres perfectly
  with the work of Murfet and Salarian that identifies the pure
  derived category of \Ftac\ complexes of flat quasi-coherent sheaves
  as the natural non-affine analogue of the homotopy category of
  totally acyclic complexes of projective modules.
\end{abstract}

\maketitle

\thispagestyle{empty}

%%%%%%%%%%%%%%%%%%%%%%%%%%%%%%%%%%%%%%%%%%%%%%%%%%%%%%%%%%%%%%%%%%%%%%
\section*{Introduction}

\noindent
A classic result due to Buchweitz \cite{ROB86} says that the
singularity category of a Gorenstein local ring $A$ is equivalent to
the homotopy category $\catKtac{\catprj{A}}$ of totally acyclic
complexes of finitely generated projective $A$-modules. The latter
category is also equivalent to the stable category of finitely
generated maximal Cohen-Macaulay $A$-modules or, in a different
terminology, to the stable category $\mathsf{StGprj}(A)$ of finitely
generated Gorenstein projective $A$-modules. This second equivalence
extends beyond the realm of Gorenstein local rings and finitely
generated modules: For every ring $A$, the category
$\catKtac{\catPrj{A}}$ of totally acyclic complexes of projective
$A$-modules is equivalent to the stable category $\mathsf{StGPrj}(A)$
of Gorenstein projective $A$-modules. We obtain this folklore result
as a special case of \cite[Corollary~3.9]{CETa}. What is the analogue
in the non-affine setting?

Murfet and Salarian \cite{DMfSSl11} offer a non-affine analogue of the
category $\catKtac{\catPrj{A}}$ over a semi-separated noetherian
scheme $X$ in the form of the Verdier quotient,
\begin{equation*}
  \catDFtac{\FlatX}
  =
  \frac{\catKFtac{\FlatX}}{\catKpure{\FlatX}}\:,
\end{equation*}
of the homotopy category of \Ftac\ complexes of flat quasi-coherent
sheaves on $X$ by its subcategory of pure-acyclic complexes.  Indeed,
for a commutative noetherian ring $A$ of finite Krull dimension and
$X=\Spec(A)$, the categories $\catKtac{\catPrj{A}}$ and
$\catDFtac{\FlatX}$ are equivalent by \cite[Lemma
4.22]{DMfSSl11}. What remains is to identify an analogue of the
category $\mathsf{StGPrj}(A)$ in the non-affine setting, and that is
the goal of this paper.

The stable category of Gorenstein projective modules is a standard
construction that applies to any Frobenius category. The category of
Gorenstein flat modules is rarely Frobenius; it is essentially only
Frobenius if it coincides with the category of Gorenstein projective
modules, see \cite[Theorem 4.5]{CETa}. The cotorsion Gorenstein flat
modules, however, do form a Frobenius category, and a special case of
\cite[Corollary~5.9]{CETa} says that for a commutative noetherian ring
$A$ of finite Krull dimension, the category $\mathsf{StGPrj}(A)$ is
equivalent to the stable category $\stabcatGFC{A}$ of cotorsion
Gorenstein flat modules. This identifies a candidate category and,
indeed, the goal stated above is obtained (in~\ref{cor.main}) with

\begin{theorema}
  Let $X$ be a semi-separated noetherian scheme. The stable category
  $\stabcatGFC{X}$ of cotorsion Gorenstein flat sheaves is equivalent
  to $\catDFtac{\FlatX}$.
\end{theorema}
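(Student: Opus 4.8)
The plan is to realize the equivalence as a composite of two equivalences passing through the homotopy category $\catKFtac{\catFlat{X}\cap\catCot{X}}$ of \Ftac\ complexes of flat-cotorsion sheaves,
\[
  \stabcatGFC{X}\;\xrightarrow{\ \simeq\ }\;\catKFtac{\catFlat{X}\cap\catCot{X}}\;\xrightarrow{\ \simeq\ }\;\catDFtac{\FlatX}\:.
\]
The first arrow incarnates the general principle that the stable category of a Frobenius category is the homotopy category of complete resolutions by its projective-injective objects; here the projective-injective objects of $\catGFC{X}$ are precisely the flat-cotorsion sheaves. The second arrow is induced by the inclusion of flat-cotorsion complexes into all flat complexes followed by the Verdier localization, and the heart of the matter is to show that this inclusion becomes an equivalence onto the quotient.

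First I would make the left-hand equivalence precise. Since $\catGFC{X}$ is Frobenius with projective-injective class $\catFlat{X}\cap\catCot{X}$, every cotorsion Gorenstein flat sheaf $\G$ arises as a cocycle $\G=\Cy[0]{\T}$ of a complete resolution $\T$, that is, an acyclic complex of flat-cotorsion sheaves all of whose cocycles are again cotorsion Gorenstein flat; such a $\T$ is moreover \Ftac. The assignment $\G\mapsto\T$ defines the functor, and taking the cocycle $\Cy[0]{-}$ supplies a quasi-inverse. The standard checks — uniqueness of the resolution up to homotopy, that morphisms descend to homotopy classes, and that suspension matches the cosyzygy — are formal once one knows that the flat-cotorsion sheaves are exactly the projective-injectives, that complete resolutions exist and are \Ftac, and that the cocycles of an \Ftac\ complex of flat-cotorsion sheaves are cotorsion Gorenstein flat; all of this is part of the Frobenius structure established earlier.

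The crux is the right-hand equivalence. For essential surjectivity I would show that every \Ftac\ complex $\F$ of flat sheaves is isomorphic in $\catDFtac{\FlatX}$ to a complex of flat-cotorsion sheaves: using that each flat sheaf admits a special cotorsion preenvelope with flat cokernel, one builds a pure quasi-isomorphism $\F\to\T$ with $\T$ flat-cotorsion in each degree and pure-acyclic mapping cone, hence zero in the quotient; one then checks that $\T$ remains \Ftac, which holds because tensoring with an injective sheaf preserves the relevant exactness. For full faithfulness the key point is that a pure-acyclic complex of flat-cotorsion sheaves is contractible — the flat-cotorsion sheaves being the injective objects of the pure exact structure on $\catFlat{X}$ — so that $\catKpure{\FlatX}$ meets $\catKFtac{\catFlat{X}\cap\catCot{X}}$ only in $0$ and the Hom-groups between flat-cotorsion complexes are unchanged by the localization.

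The step I expect to be the main obstacle is controlling these flat-cotorsion resolutions sheaf-theoretically. Unlike the affine case, neither flatness nor the cotorsion condition is read off naively from stalks, so assembling the degreewise preenvelopes into a single complex while preserving \Ftac-ness, and verifying contractibility of pure-acyclic flat-cotorsion complexes, must be carried out through the semi-separated hypothesis — passing to a finite affine cover and gluing by the adjunction between restriction and quasi-coherent pushforward. This descent bookkeeping, rather than any single homological input, is where the real work lies; the module-level statements it globalizes are exactly those underlying \cite[Corollary~5.9]{CETa} and the Murfet--Salarian comparison \cite[Lemma~4.22]{DMfSSl11}.
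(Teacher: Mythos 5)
Your route coincides with the paper's: both factor the equivalence through the homotopy category $\catKFtac{\FlatX\cap\catCot{X}}$, the first equivalence coming from the Frobenius structure on $\catGFC{X}$ (this is Remark~\ref{stabcats}, resting on Theorem~\ref{equalities} and Proposition~\ref{Ftac_thm}) and the second being Theorem~\ref{main_thm}. Your essential-surjectivity argument is also the paper's in spirit, except that the paper runs the resolution at the level of complexes, using the complete cotorsion pair $(\catFlatTilde,\catdgCot{X})$ of Remark~\ref{fc}; this is not just cleaner but sidesteps a real issue in your version, namely that degreewise special cotorsion preenvelopes do not automatically assemble into a chain map $\F\to\T$.

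The genuine gap is in your full-faithfulness argument. You reason: pure-acyclic complexes of flat cotorsion sheaves are contractible, hence $\catKpure{\FlatX}$ meets $\catKFtac{\FlatX\cap\catCot{X}}$ only in $0$, hence Hom-groups are unchanged by the localization. The last implication is false in general: for a Verdier quotient $\mathsf{T}/\mathsf{S}$, trivial intersection of a triangulated subcategory $\mathsf{B}$ with $\mathsf{S}$ does not make $\mathsf{B}\to\mathsf{T}/\mathsf{S}$ fully faithful. A morphism in the quotient is a roof whose denominator has cone an \emph{arbitrary} object of $\catKpure{\FlatX}$ --- an acyclic complex of flat sheaves with flat cycles whose terms need not be cotorsion --- so contractibility of the flat-cotorsion ones gives no handle on it. What is actually needed is the orthogonality statement that every chain map from any complex in $\catFlatTilde$ to any complex of flat cotorsion sheaves is null-homotopic, i.e., that every complex of cotorsion sheaves lies in $\catdgCot{X}$. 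That is precisely Theorem~\ref{acyc.cotor}, the technical heart of the paper, whose proof occupies all of Section~3 (\v{C}ech resolutions of complexes of flat sheaves plus the direct-limit Lemma~\ref{lemma.stov}). Moreover, even your contractibility claim does not follow formally from ``flat-cotorsion sheaves are the injectives of the pure exact structure'': degreewise injectivity produces homotopies on bounded-below complexes, but for unbounded acyclic complexes such lifting arguments break down (compare: acyclic complexes of injective modules over a non-regular noetherian ring need not be contractible). In the affine case the contractibility you invoke is itself a theorem of Bazzoni--Cort\'es Izurdiaga--Estrada \cite{BCE}, and its sheaf-theoretic counterpart again rests on Theorem~\ref{acyc.cotor}. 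In short, your proposal sets up the right reduction but implicitly assumes the one result that makes it work.
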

\noindent In the statement of this theorem, and everywhere else in
this paper, a sheaf means a quasi-coherent sheaf. A sheaf on $X$ is
called cotorsion if it is right Ext-orthogonal to flat sheaves on $X$.

A crucial step towards the equivalence in Theorem~A is to prove (in
\ref{Ftac_thm} and \ref{equalities}) that the sheaves that are both
cotorsion and Gorenstein flat are precisely the sheaves that arise as
cycles in \Ftac\ complexes of flat-cotorsion sheaves. This is exactly
what happens in the affine case, and it transpires that the main
take-away from \cite{CETa} also applies in the non-affine setting: One
should work with sheaves that are both cotorsion and Gorenstein flat
rather than all Gorenstein flat sheaves! One manifestation is a result
(\ref{gorX}) that sharpens \cite[Theorem~4.27]{DMfSSl11}:

\begin{theoremb}
  A semi-separated noetherian scheme $X$ is Gorenstein if and only if
  every acyclic complex of flat-cotorsion sheaves on $X$ is \Ftac.
\end{theoremb}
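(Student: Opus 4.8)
The plan is to reduce the assertion to its counterpart for \emph{all} flat sheaves, namely \cite[Theorem~4.27]{DMfSSl11}, which states that $X$ is Gorenstein precisely when every acyclic complex of flat sheaves is \Ftac. One implication is then immediate: if $X$ is Gorenstein, every acyclic complex of flat sheaves is \Ftac\ by loc.\ cit., and a complex of flat cotorsion sheaves is in particular a complex of flat sheaves. The substance lies in the converse, where one is handed the weaker hypothesis that only the acyclic complexes of flat \emph{cotorsion} sheaves are \Ftac\ and must promote it to the same statement for all acyclic complexes of flat sheaves.

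The observation that drives the converse is that a pure-acyclic complex $F$ of flat sheaves is automatically \Ftac: each of its pure exact sequences $0 \to Z_n \to F_n \to Z_{n-1} \to 0$ of cycles stays exact after applying $\I \otimes_{\R} (-)$ for any injective sheaf $\I$, so $\I \otimes_{\R} F$ is again acyclic. More generally, if $0 \to F \to C \to W \to 0$ is a degreewise pure exact sequence of flat complexes with $W$ pure-acyclic, then $0 \to \I \otimes_{\R} F \to \I \otimes_{\R} C \to \I \otimes_{\R} W \to 0$ is exact with acyclic right-hand term, so the long exact homology sequence shows that $F$ is \Ftac\ if and only if $C$ is.

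The technical heart is then to attach to an arbitrary acyclic complex $F$ of flat sheaves an acyclic complex $C$ of flat cotorsion sheaves fitting in such a sequence. This rests on completeness of the flat cotorsion pair $(\FlatX, \catCot{X})$ in $\Qcoh{X}$, available since $X$ is semi-separated and noetherian: a special cotorsion preenvelope $0 \to F_0 \to C_0 \to F_0' \to 0$ of a flat sheaf $F_0$ has cotorsion $C_0$ and flat $F_0'$, so $C_0$ is flat by extension-closedness of flats, the sequence is pure exact, and $C_0$ is flat cotorsion. Lifting this pair to $\Ch{\Qcoh{X}}$ by the standard induced-cotorsion-pair machinery produces, as a special preenvelope of $F$, a complex $C$ with flat cotorsion components sitting in a degreewise pure exact sequence $0 \to F \to C \to W \to 0$ with $W$ pure-acyclic; as $F$ is acyclic, so is $C$.

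With the replacement in hand the converse closes at once: $C$ is an acyclic complex of flat cotorsion sheaves, hence \Ftac\ by hypothesis, and the sequence $0 \to F \to C \to W \to 0$ together with the previous paragraph forces $F$ to be \Ftac\ as well. Every acyclic complex of flat sheaves is therefore \Ftac, and \cite[Theorem~4.27]{DMfSSl11} yields that $X$ is Gorenstein. I expect the main obstacle to be the replacement step: rather than a naive degreewise preenvelope, one must assemble the levelwise data into a single short exact sequence of complexes with flat cotorsion middle term and pure-acyclic cokernel, which is exactly what the flat cotorsion pair lifted to chain complexes provides.
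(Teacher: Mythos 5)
Your proposal is correct and follows essentially the same route as the paper: reduce both directions to \cite[Theorem~4.27]{DMfSSl11}, then for the converse use the complete cotorsion pair $(\catFlatTilde,\catdgCot{X})$ in $\Ch{\Qcoh{X}}$ (your ``flat cotorsion pair lifted to chain complexes'') to embed an acyclic complex of flat sheaves into an acyclic complex of flat cotorsion sheaves with pure-acyclic cokernel, and conclude from the hypothesis via the degreewise-pure short exact sequence. The paper simply cites this lifted pair directly (its Remark~\ref{fc}, via Gillespie and Hovey) rather than constructing it, and deduces flatness of the middle term's components by extension-closedness exactly as you do.
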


A second manifestation---actually the result behind Theorem A---is
that the category $\catDFtac{\FlatX}$ considered by Murfet and
Salarian is equivalent to the homotopy category
$\catKFtac{\FlatX \cap \catCot{X}}$ of \Ftac\ complexes of flat
cotorsion sheaves (see \ref{main_thm}). That is, passing from the
affine to the non-affine setting, one can replace the homotopy
category $\catKtac{\catPrj{A}}$ by another homotopy category.

Up to equivalence, the category $\catKFtac{\FlatX \cap \catCot{X}}$
arises in a related, yet different, context. The category of
Gorenstein flat sheaves on a semi-separated noetherian scheme $X$ is
part of a complete hereditary cotorsion pair (see
\ref{theorem.gfcotpair}), and one that is comparable to the cotorsion
pair of flat sheaves and cotorsion sheaves on $X$. Through work of
Hovey~\cite{MHv02} and Gillespie~\cite{JGl15}, these cotorsion pairs
induce a model structure on the category of sheaves on $X$. We prove
(see \ref{stabcats}) that the associated homotopy category is
equivalent to $\catKFtac{\FlatX \cap \catCot{X}}$.

%%%%%%%%%%%%%%%%%%%%%%%%%%%%%%%%%%%%%%%%%%%%%%%%%%%%%%%%%%%%%%%%%%%%%%

\section{Gorenstein flat sheaves}
\label{sec:Ntot}

\noindent
In this paper, the symbol $X$ denotes a scheme with structure sheaf
$\R$. By a sheaf on $X$ we shall always mean a quasi-coherent sheaf,
and $\Qcoh{X}$ denotes the category of (quasi-coherent) sheaves on
$X$. We frequently add the assumption that $X$ is
\emph{semi-separated}, by which we mean that $X$ has an open affine
covering $\calU$ such that $U \cap V$ is affine for all
$U,V \in \calU$; such a covering is referred to as
\emph{semi-separating}. We use standard cohomological notation for
cochain complexes.

In this first section we show that over a semi-separated noetherian
scheme, one can equivalently define Gorenstein flatness of sheaves
globally, locally, or stalkwise.  Let $A$ be a commutative ring. An
acyclic complex $F$ of flat $A$-modules is called \emph{\Ftac} if the
complex $\tp[A]{I}{F}$ is acyclic for every injective
$A$-module~$I$. An $A$-module $M$ is \emph{Gorenstein flat} if there
exists an \Ftac\ complex $F$ with $M = \Cy[0]{F}$. Denote by
$\catGF{A}$ the category of Gorenstein flat $A$-modules.

\begin{remark}
  \label{rmk:Ftac}
  For an acyclic complex $\F$ of flat sheaves on $X$ there is a
  global, a local, and a stalkwise notion of \textbf{F}-total
  acyclicity:
  \begin{itemize}
  \item For every injective sheaf $\I$ on $X$ the complex
    $\tp[]{\I}{\F}$ is acyclic.
  \item For every open affine $U\subseteq X$ the $\R(U)$-complex
    $\F(U)$ is \Ftac.
  \item For every $x\in X$ the $\Rx$-complex $\F_x$ is \Ftac.
  \end{itemize}
  It is proved in \cite[Lemmas~4.4 and 4.5]{DMfSSl11} that all three
  notions agree if the scheme $X$ is semi-separated
  noetherian. Christensen, Estrada, and Iacob
  \cite[Corollary~2.8]{CEI-17} show that the local notion is
  Zariski-local, and by \cite[Proposition~2.10]{CEI-17} the local and
  global notions agree if $X$ is semi-separated and quasi-compact
  (which is weaker than noetherian).
\end{remark}

\begin{definition}
  \label{dfn:GF}
  Assume that $X$ is semi-separated noetherian. An acyclic complex
  $\F$ of flat sheaves on $X$ is called \textbf{F}-\emph{totally
    acyclic} if it satisfies the equivalent conditions in
  Remark~\ref{rmk:Ftac}. A sheaf $\M$ on $X$ is called
  \emph{Gorenstein flat} if there exists an \textbf{F}-totally acyclic
  complex $\F$ of flat sheaves on $X$ with $\M = \Cy[0]{\F}$. Denote
  by $\catGF{X}$ the category of Gorenstein flat sheaves on $X$.
\end{definition}

Over any scheme, Gorenstein flatness can also be defined locally or
stalkwise, and we proceed to show that these notions agree with
Gorenstein flatness as defined above if the scheme is semi-separated
noetherian.

\begin{definition}
  \label{def:G.flat.def}
  A sheaf $\M$ on $X$ is called \emph{locally Gorenstein flat} if for
  every open affine subset $U\subseteq X$ the $\R(U)$-module $\M(U)$
  is Gorenstein flat, and $\M$ is called \emph{stalkwise Gorenstein
    flat} if $\M_x$ is a Gorenstein flat $\Rx$-module for every
  $x\in X$.
\end{definition}

Like local \textbf{F}-total acyclicity, local Gorenstein flatness is a
Zariski-local property, at least under mild assumptions on the
scheme. As shown in \cite{CEI-17}, this follows from the next
proposition.

\begin{proposition}
  \label{prp:G.flat.local}
  Let $\varphi\colon A\to B$ be a flat homomorphism of commutative
  rings.
  \begin{enumerate}
  \item[\rm (a)] If $M$ is a Gorenstein flat $A$-module, then
    $\tp[A]{B}{M}$ is a Gorenstein flat $B$-module.
  \item[\rm (b)] Assume that $A$ is coherent and $\varphi$ is
    faithfully flat.  An $A$-module $M$ is Gorenstein flat if the
    $B$-module $\tp[A]{B}{M}$ is Gorenstein flat.
  \end{enumerate}
\end{proposition}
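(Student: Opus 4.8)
The plan is to handle the two parts separately, using the module-level definitions of \Ftac\ complexes and Gorenstein flat modules. For part (a), start from an \Ftac\ complex $F$ of flat $A$-modules with $M=\Cy[0]{F}$. Since $B$ is flat over $A$, the complex $\tp[A]{B}{F}$ is acyclic and consists of flat $B$-modules, and because $\tp[A]{B}{-}$ is exact it commutes with cycles, so $\Cy[0]{\tp[A]{B}{F}}=\tp[A]{B}{M}$. It then remains to check \textbf{F}-total acyclicity over $B$, that is, that $\tp[B]{J}{(\tp[A]{B}{F})}\cong\tp[A]{J}{F}$ is acyclic for every injective $B$-module $J$. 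The key observation is that every injective $B$-module is, as a $B$-module, a direct summand of $\operatorname{Hom}_A(B,E)$ for some injective $A$-module $E$: embed $J$ into an injective $A$-module $E$ and compose the unit $J\to\operatorname{Hom}_A(B,J)$, $j\mapsto(b\mapsto bj)$, with the induced map $\operatorname{Hom}_A(B,J)\to\operatorname{Hom}_A(B,E)$ to obtain a $B$-split monomorphism. Since $B$ is flat and $E$ is injective, $\operatorname{Hom}_A(B,E)$ is an injective $A$-module, so $\tp[A]{\operatorname{Hom}_A(B,E)}{F}$ is acyclic by \textbf{F}-total acyclicity of $F$ over $A$; being a direct summand, $\tp[A]{J}{F}$ is acyclic as well. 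Thus $\tp[A]{B}{F}$ is \Ftac\ and $\tp[A]{B}{M}$ is Gorenstein flat.

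For part (b) the goal is to manufacture an \Ftac\ complex of flat $A$-modules having $M$ as its $0$-cocycle; here coherence of $A$ is essential and faithful flatness is the descent tool. The easy half is the left half: a flat resolution of $M$ exists over any ring, and after $\tp[A]{B}{-}$ it becomes a flat resolution of the Gorenstein flat module $\tp[A]{B}{M}$ that stays exact against injective $B$-modules; as $B$ is faithfully flat, acyclicity of a complex of $A$-modules can be tested after $\tp[A]{B}{-}$, which supplies the vanishing needed on the left. The substance is the right half: one must embed $M$ into a flat module with a cokernel of the same kind and iterate. Over a coherent ring products of flat modules are flat, so the flat modules form a preenveloping class, and I would use (special) flat preenvelopes to build a coresolution $0\to M\to F^0\to F^1\to\cdots$ of flat $A$-modules and splice it onto the flat resolution, obtaining an acyclic complex $F$ of flats with $\Cy[0]{F}=M$.

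The crux is to certify that this $F$ is \Ftac, i.e. that $\tp[A]{E}{F}$ is acyclic for \emph{every} injective $A$-module $E$. This cannot be checked by naive base change, since an injective $A$-module need not remain injective, nor of finite injective dimension, after $\tp[A]{B}{-}$, so the injective test does not transport to $B$. The way around is character-module duality, which is well behaved precisely because $A$ is coherent: $\tp[A]{E}{F}$ is acyclic if and only if $\operatorname{Hom}_A(F,E^+)$ is acyclic, for injective $E$ the module $E^+$ is flat-cotorsion, and dually an \Ftac\ complex of flats is exactly one whose character complex is a totally acyclic complex of injectives. I would run this duality on the $B$-side, where $\tp[A]{B}{M}$ is Gorenstein flat, to see that $(\tp[A]{B}{M})^+\cong\operatorname{Hom}_A(B,M^+)$ is Gorenstein injective over $B$, and then descend Gorenstein injectivity of the character module to $A$ using faithful flatness together with the good behaviour, over the coherent ring $A$, of the (Gorenstein) injective classes and the completeness and heredity of the associated cotorsion pairs. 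Reconciling this injective test with faithfully flat descent is exactly where I expect the main difficulty to lie; equivalently, it is the construction and exactness of the right half of the complex, which is forced by the Gorenstein flat structure of $\tp[A]{B}{M}$ but must be realized over $A$ itself.
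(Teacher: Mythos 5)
Part (a) of your proposal is correct, and it amounts to a direct proof of the base-change result that the paper simply cites, namely \cite[Proposition 2.7(1)]{CEI-17}: flat base change preserves acyclicity, flatness, and cycles, and your co-induction argument---every injective $B$-module $J$ is a $B$-direct summand of $\operatorname{Hom}_A(B,E)$ for some injective $A$-module $E$, while $\operatorname{Hom}_A(B,E)$ is injective over $A$ because $\operatorname{Hom}_A(-,\operatorname{Hom}_A(B,E))\cong\operatorname{Hom}_A(\tp[A]{-}{B},E)$ is exact---correctly reduces the injective test over $B$ to the injective test over $A$.

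Part (b), however, has a genuine gap, and you point at it yourself. Your plan is: (i) from Gorenstein flatness of $\tp[A]{B}{M}$ deduce that $(\tp[A]{B}{M})^+\cong\operatorname{Hom}_A(B,M^+)$ is Gorenstein injective over $B$; (ii) descend along the faithfully flat map to conclude that $M^+$ is Gorenstein injective over $A$; (iii) invoke character duality over the coherent ring $A$ to conclude that $M$ is Gorenstein flat. Steps (i) and (iii) are available in the literature (Holm's duality over coherent rings, and \v{S}aroch--\v{S}tov\'{\i}\v{c}ek in general), but step (ii) is left entirely unproved, and it is just as strong as the statement being proved: the duality has not removed the difficulty, only relocated it. Concretely, since $M^+$ is pure-injective and $A\to B$ is a pure monomorphism, $M^+$ is an $A$-linear direct summand of $\operatorname{Hom}_A(B,M^+)$, so (ii) would follow if Gorenstein injectivity descended along restriction of scalars; restriction along a flat map does preserve injectivity (its left adjoint $\tp[A]{B}{-}$ is exact), but to restrict \emph{total} acyclicity one must show that $\operatorname{Hom}_A(E,I^\bullet)\cong\operatorname{Hom}_B(\tp[A]{B}{E},I^\bullet)$ is acyclic for every injective $A$-module $E$, and $\tp[A]{B}{E}$ is in general neither injective nor of finite injective dimension over $B$---precisely the obstruction you flagged when dismissing ``naive base change.'' The same flaw already undermines the half you call easy: to test $\Tor{i}{A}{E}{M}=0$ after base change you need $\Tor{i}{B}{\tp[A]{B}{E}}{\tp[A]{B}{M}}=0$, which Gorenstein flatness of $\tp[A]{B}{M}$ does not give, for the same reason. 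The paper avoids all of this by quoting an external theorem that is exactly your missing ingredient: the faithfully flat descent result of Christensen--K\"oksal--Liang \cite[Theorem 1.1]{CKL-17}, applicable because $\catGF{A}$ is closed under extensions by \cite[Corollary 3.12]{JSrJSt}. Without a proof of (ii), or a citation of such a descent theorem, your outline does not constitute a proof of (b).
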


\begin{proof}
  (a) Let $F$ be an \Ftac\ complex of flat $A$-modules with
  $M = \Cy[0]{F}$. By \cite[Proposition 2.7(1)]{CEI-17} the
  $B$-complex $\tp[A]{B}{F}$ is an \Ftac\ complex of flat $B$-modules,
  so $\tp[A]{B}{M} = \Cy[0]{\tp[A]{B}{F}}$ is a Gorenstein flat
  $B$-module.

  (b) It follows from work of \v{S}aroch and \v{S}{\soft{t}}ov{\'{\i}}{\v{c}}ek
  \cite[Corollary~4.12]{JSrJSt20} that the category $\catGF{A}$ is
  closed under extensions, so the assertion is immediate from a result
  of Christensen, K\"oksal, and Liang \cite[Theorem 1.1]{CKL-17}.
\end{proof}

\begin{corollary}
  \label{cor:G.flat.local}
  Assume that $X$ is locally coherent. A sheaf $\M$ on $X$ is locally
  Gorenstein flat if there exists an open affine covering $\calU$ of
  $X$ such that the $\R(U)$-module $\M(U)$ is Gorenstein flat for
  every $U\in \calU$.
\end{corollary}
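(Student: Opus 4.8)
By definition I must show that $\M(V)$ is Gorenstein flat over $\R(V)$ for \emph{every} open affine $V\subseteq X$, granted this only for the members of one fixed covering $\calU$; the plan is to run the standard Zariski-descent argument with Proposition~\ref{prp:G.flat.local} as the input. Fix an open affine $V\subseteq X$ and set $B=\R(V)$, which is coherent by the hypothesis that $X$ is locally coherent. For each $x\in V$ choose $U\in\calU$ with $x\in U$; since $U\cap V$ is open in $V=\Spec B$ and the basic opens form a basis, there is $g\in B$ with $x\in D(g)\subseteq U\cap V$, where $D(g)=\Spec{B_g}$. As $\Spec B$ is quasi-compact, finitely many such basic opens $D(g_1),\dots,D(g_n)$ already cover $V$, each contained in a single member $U_j\in\calU$.

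Next I would localize one factor at a time. Fix $j$, write $A_j=\R(U_j)$ and $M_j=\M(U_j)$; by assumption $M_j$ is Gorenstein flat over $A_j$. The inclusion $D(g_j)\subseteq U_j$ is an open immersion of affine schemes, hence corresponds to a flat ring homomorphism $A_j\to B_{g_j}$, and since $\M$ is quasi-coherent its value on the open affine $D(g_j)\subseteq U_j$ is $\M(D(g_j))\is\tp[A_j]{B_{g_j}}{M_j}$. Proposition~\ref{prp:G.flat.local}(a) shows this is Gorenstein flat over $B_{g_j}$. On the other hand $D(g_j)$ is a basic open of $V$, so $\M(D(g_j))\is\M(V)_{g_j}$; thus each localization $\M(V)_{g_j}$ is Gorenstein flat over $B_{g_j}$.

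Finally I would descend along $B\to B':=\prod_{j=1}^n B_{g_j}$, which is faithfully flat precisely because $D(g_1),\dots,D(g_n)$ cover $\Spec B$, i.e.\ $g_1,\dots,g_n$ generate the unit ideal. Tensoring gives $\tp[B]{B'}{\M(V)}\is\prod_{j=1}^n\M(V)_{g_j}$ as a $B'$-module. Since flat modules, injective modules, and acyclicity all split over the finite product ring $B'$, so does \textbf{F}-total acyclicity, whence a $B'$-module is Gorenstein flat if and only if each of its components is; by the previous paragraph $\tp[B]{B'}{\M(V)}$ is therefore Gorenstein flat over $B'$. As $B$ is coherent, Proposition~\ref{prp:G.flat.local}(b) yields that $\M(V)$ is Gorenstein flat over $B$, and since $V$ was arbitrary, $\M$ is locally Gorenstein flat. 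I expect the step needing the most care to be this descent: confirming that Gorenstein flatness over the finite product $B'$ is detected componentwise (so that part (a) may be applied factor by factor and part (b) only once), together with the elementary but essential geometric observation that every open affine can be covered by basic opens each lying inside a single member of $\calU$.
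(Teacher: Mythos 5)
Your proposal is correct and is essentially the paper's proof with the black box opened: the paper combines Proposition~\ref{prp:G.flat.local} (ascent along flat maps, descent along faithfully flat maps over coherent rings) with a citation of \cite[Lemma~2.4]{CEI-17}, and your covering argument---basic opens of an arbitrary open affine $V$ chosen inside members of $\calU$, quasi-compactness, ascent via part~(a), then a single descent via part~(b) along the faithfully flat map to the finite product ring---is precisely the content of that cited lemma. The two points you flag as needing care (componentwise detection of Gorenstein flatness over a finite product ring, and coherence of $\R(V)$ for \emph{every} open affine $V$, which rests on coherence itself being an ascent--descent property) are indeed where the details live, and both check out.
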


\begin{proof}
  Proposition \ref{prp:G.flat.local} shows that Gorenstein flatness is
  an ascent--descent property for modules over commutative coherent
  rings. Now invoke \cite[Lemma~2.4]{CEI-17}.
\end{proof}

\begin{theorem}
  \label{thm:equiv_Noetss}
  Assume that $X$ is semi-separated noetherian. For a sheaf $\M$ on
  $X$ the following conditions are equivalent.
  \begin{enumerate}
  \item[$(i)$] $\M$ is Gorenstein flat.
  \item[$(ii)$] $\M$ is locally Gorenstein flat.
  \item[$(iii)$] $\M$ is stalkwise Gorenstein flat.
  \end{enumerate}
\end{theorem}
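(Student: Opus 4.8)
The plan is to prove the cycle of implications $(i)\Rightarrow(ii)\Rightarrow(iii)\Rightarrow(i)$. The first two are the ``restriction'' directions and should be routine given Remark~\ref{rmk:Ftac} and Proposition~\ref{prp:G.flat.local}; the content lies in closing the loop with $(iii)\Rightarrow(i)$, where one must manufacture a single global \Ftac\ complex out of purely stalkwise data.

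For $(i)\Rightarrow(ii)$ I would take an \Ftac\ complex $\F$ of flat sheaves with $\M=\Cy[0]{\F}$. For an open affine $U\subseteq X$ the section functor $\Gamma(U,-)$ is exact on $\Qcoh{X}$, so $\F(U)$ is an acyclic complex of flat $\R(U)$-modules with $\M(U)=\Cy[0]{\F(U)}$, and it is \Ftac\ by the local characterization in Remark~\ref{rmk:Ftac}; hence $\M(U)$ is Gorenstein flat. For $(ii)\Rightarrow(iii)$, given $x\in X$ choose an open affine $U\ni x$; then $\M_x$ is the localization of $\M(U)$ at the prime corresponding to $x$, and since $\R(U)\to\Rx$ is flat, Proposition~\ref{prp:G.flat.local}(a) shows $\M_x$ is a Gorenstein flat $\Rx$-module.

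The heart of the matter is $(iii)\Rightarrow(i)$, and I would first isolate the following reduction: an acyclic complex $\F$ of flat sheaves is \Ftac\ as soon as every cocycle $\Cy[n]{\F}$ is stalkwise Gorenstein flat. Indeed, by Remark~\ref{rmk:Ftac} it is enough to verify \textbf{F}-total acyclicity at each stalk, and for a Gorenstein flat $\Rx$-module $N$ one has $\Tor{i}{\Rx}{\I}{N}=0$ for every $i>0$ and every injective $\Rx$-module $\I$; breaking $\F_x$ into the short exact sequences $0\to\Cy[n]{\F_x}\to\F^n_x\to\Cy[n+1]{\F_x}\to 0$ and applying $\tp[\Rx]{\I}{-}$ then shows $\tp[\Rx]{\I}{\F_x}$ is acyclic. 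Granting this, it suffices to build an acyclic complex $\F$ of flat sheaves with $\M=\Cy[0]{\F}$ all of whose cocycles are stalkwise Gorenstein flat. The negative half is the easy one: resolving $\M$ by flat sheaves (which exist in abundance in $\Qcoh{X}$) produces $\cdots\to\F^{-1}\to\F^0$ with $\operatorname{im}(\F^{-1}\to\F^0)=\M$, and since the class of Gorenstein flat $\Rx$-modules is closed under kernels of epimorphisms---being the left-hand class of the complete hereditary cotorsion pair of \v{S}aroch and \v{S}tov{\'{\i}}{\v{c}}ek \cite{JSrJSt}---each syzygy is stalkwise Gorenstein flat by descending induction, using that the stalk of a flat sheaf is flat.

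The main obstacle is the positive half: one must produce a \emph{co-proper} flat coresolution $0\to\M\to\F^0\to\F^1\to\cdots$, i.e.\ an exact sequence of flat sheaves all of whose cokernels are again stalkwise Gorenstein flat. I would build this one step at a time from flat preenvelopes, which I expect to be available because $X$ is locally coherent, and then control each step entirely through its stalks: the delicate points are that the preenvelope $\M\to\F^0$ is a monomorphism (equivalently, that $\M$ embeds into a flat sheaf) and that its cokernel is stalkwise Gorenstein flat, both of which I would verify by reduction to the stalks via Remark~\ref{rmk:Ftac} and the descent/ascent of Gorenstein flatness along the flat maps $\R(U)\to\Rx$ furnished by Proposition~\ref{prp:G.flat.local}. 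Splicing the two halves yields the desired complex $\F$, and the reduction above upgrades acyclicity to \textbf{F}-total acyclicity, so that $\M=\Cy[0]{\F}$ is Gorenstein flat. The recurring theme is that the global construction must be steered entirely by its stalks, where hypothesis $(iii)$ lives, and it is precisely the permanence properties of Gorenstein flat modules from \cite{JSrJSt} together with the stalkwise criterion for \textbf{F}-total acyclicity that make this transfer possible.
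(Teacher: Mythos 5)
Your implications $(i)\Rightarrow(ii)\Rightarrow(iii)$, your reduction lemma (an acyclic complex of flat sheaves is \Ftac\ as soon as all of its cycles are stalkwise Gorenstein flat), and your treatment of the negative half of the complex all agree in substance with the paper's proof and are sound. The gap is exactly where you place the ``main obstacle,'' and it is genuine on two counts. First, the existence of flat preenvelopes in $\Qcoh{X}$ is not established by local coherence: the classical existence theorem is for modules over a coherent ring, and its proof rests on the class of flat modules being closed under products; in $\Qcoh{X}$ products are computed via the coherator and do not preserve flatness, so that argument does not transfer, and you offer no substitute. (Likewise, that the preenvelope is monic requires first producing \emph{some} embedding of $\M$ into a flat sheaf, which is a global statement---stalkwise embeddings $\M_x\to F_x$ do not glue---so it too cannot be ``verified by reduction to the stalks.'') Second, and more fatally, even granting a monic map $\M\to\F^0$ into a flat sheaf, its cokernel need not be stalkwise Gorenstein flat, and no stalkwise argument can show that it is: the stalk of a preenvelope is not a preenvelope, so at a point $y$ all you know is that $\M_y$ is a Gorenstein flat module mapping monically into a flat module, and the cokernel of such a map can fail to be Gorenstein flat. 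Already over $\ZZ$, the inclusion $\ZZ\xrightarrow{\,2\,}\ZZ$ of a flat (hence Gorenstein flat) module into a flat module has cokernel $\ZZ/2\ZZ$, which embeds in no torsion-free module and therefore is not Gorenstein flat.

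What is missing is the explicit construction that the paper uses to engineer a cokernel that \emph{is} stalkwise Gorenstein flat. For each $x\in X$ one chooses an exact sequence $0\to\M_x\to F_x\to T_x\to 0$ with $F_x$ flat and $T_x$ Gorenstein flat over $\Rx$ (available because $\M_x$ is Gorenstein flat), pushes forward along $i_x\colon\Spec(\Rx)\to X$, and sets $\F^0=\bigoplus_{x\in X}(i_x)_*(\widetilde{F_x})$, a flat sheaf receiving a monomorphism from $\M$. The point is then a comparison, stalk by stalk, with the chosen local sequences: at $y$ the projection $\F^0_y\to F_y$ has flat kernel $L$, and the Snake Lemma exhibits $\K^1_y=\operatorname{coker}(\M_y\to\F^0_y)$ in an extension $0\to L\to\K^1_y\to T_y\to 0$, so closure of Gorenstein flat modules under extensions \cite[Corollary~3.12]{JSrJSt} gives that $\K^1$ is stalkwise Gorenstein flat and the step can be iterated. (The remaining issue---that the resulting coresolution stays exact after tensoring with injective sheaves---is handled in the paper via Hartshorne's structure theorem for injectives, and your reduction lemma would serve the same purpose.) So your outline isolates the right difficulty, but the tool you propose for it does not work; the missing idea is this pushforward construction together with the Snake Lemma comparison.
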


\begin{proof}
  The implication $(i) \Rightarrow (ii)$ is trivial by the definition
  of Gorenstein flatness; see Remark~\ref{rmk:Ftac}.

  $(ii)\Rightarrow (iii)$: Let $x\in X$ and choose an open affine
  subset $U \subseteq X$ with $x\in U$. Localization is exact and
  commutes with tensor products, so it preserves Gorenstein flatness
  of modules, whence the module $\M_x \is \M(U)_x$ is Gorenstein flat
  over the local ring $\Rx \is \R(U)_x$.

  $(iii)\Rightarrow (i)$: This argument is inspired by Yang and Liu
  \cite[Lemmas 3.8 and 3.9]{GYnZLi12}.  Let $\calU=\{U_0,\ldots,U_n\}$
  be a semi-separating open affine covering of $X$. For every $x\in X$
  there is a short exact sequence of $\Rx$-modules,
  \begin{equation}
    \label{eq:a1}
    0 \lra \M_x \lra F_x \lra T_x \lra 0\:,
  \end{equation}
  where $F_x$ is flat and $T_x$ is Gorenstein flat.  For $x\in X$ and
  $U\in \calU$ consider the canonical maps
  \begin{equation*}
    i_x\colon \Spec(\Rx)\longrightarrow X \quad\text{and}\quad
    i_U\colon \Spec(\R(U))\longrightarrow X\:;
  \end{equation*}
  for $x\in U$ the map $i_x$ factors through $i_U$.  The map
  $\M \to \prod_{x\in X} (i_x)_*(\widetilde{\M_x})$ is a monomorphism
  locally at every $y\in X$, as one has
  \begin{equation*}
    \prod_{x\in X} (i_x)_*(\widetilde{\M_x}) \is
    (i_y)_*(\widetilde{\M_y}) \oplus \prod_{x\in X\setminus \{y\}}
    (i_x)_*(\widetilde{\M_x})\:,
  \end{equation*}
  so it is a monomorphism in $\Qcoh{X}$.  Now \eqref{eq:a1} yields a
  monomorphism
  \begin{equation*}
    \M \lra \prod_{x \in X} (i_x)_*(\widetilde{F_x})\:,
  \end{equation*}
  so with $\F^0= \prod_{x\in X} (i_x)_*(\widetilde{F_x})$ there is an
  exact sequence in $\Qcoh{X}$
  \begin{equation}
    \label{eq:a2}
    0 \lra \M \lra \F^0 \lra \K^1\lra 0\:.
  \end{equation}

  The first goal is to show that $\F^0$ is flat.  For every $x\in X$
  choose only one element $U_k$ in $\calU$ with $x\in U_k$, and for
  $k=0,\ldots,n$ let $I_k\subseteq U_k$ denote the corresponding
  subset such that $X$ is the disjoint union $\bigcup_{k=0}^n
  I_k$. Now one has
  \begin{equation*}
    \F^0=\prod_{x \in X} (i_x)_*(\widetilde{F_x})
    =\bigoplus_{k=0}^n\prod_{x\in I_k}(i_{U_k})_*(\widetilde{F_x})
    \cong $$ $$\stackrel{(\dag)}{\cong}\bigoplus_{k=0}^n (i_{U_k})_*(\prod_{x\in I_k}\widetilde{F_x})\cong \bigoplus_{k=0}^n (i_{U_k})_*(\widetilde{\prod_{x\in I_k} F_x})\:,
  \end{equation*}
  where the isomorphism $(\dag)$ holds as $(i_{U_k})_*$, being a right
  adjoint functor, preserves direct products.  Since $F_x$ is a flat
  $\R(U_k)$-module, and $ \R(U_k)$ is noetherian, it follows that
  $\prod_{x\in I_k}F_x$ is a flat $\R(U_k)$-module.  Hence $\F^0$ is a
  flat sheaf.

  The second goal is to show that $\K^1$ is Gorenstein flat locally at
  every point $y\in X$. Consider the commutative diagram of
  $\mathcal{O}_{X,y}$-modules
  \begin{equation*}
    \xymatrix{
      0 \ar[r] & \M_y \ar[r] \ar@{=}[d]& \F^0_y \ar[r]
      \ar[d]^-\pi & \K^1_y \ar[r] \ar[d]^-\varpi& 0 \\
      0 \ar[r] & \M_y \ar[r] & F_y \ar[r] & T_y \ar[r] & 0 \\
    }
  \end{equation*}
  where $\pi$ is the canonical projection with kernel $L$; this is a
  flat module as it is the kernel of an epimorphism between flat
  $\Rx$-modules. By the Snake Lemma $\varpi$ is surjective with kernel
  $L$, so $\K^1_y$ is Gorenstein flat; see e.g.\
  \cite[Corollary~4.12]{JSrJSt20}.

  Let $\I$ be an injective sheaf on $X$; we argue that \eqref{eq:a2}
  remains exact after tensoring with $\I$ by showing that
  $\Tor{1}{\Qcoh{X}}{\I}{\K^1}=0$ holds. For $x\in X$ let $J(x)$ be
  the sheaf on $\Spec(\Rx)$ associated to the injective hull of the
  residue field of the local ring $\Rx$. One has
  \begin{equation*}
    \I \is \bigoplus_{x\in X}(i_x)_*J(x)^{(\Lambda_x)}
  \end{equation*}
  for some index sets $\Lambda_x$; see Hartshorne \cite[Proposition
  II.7.17]{rad}. Therefore, it suffices to verify that
  $\Tor{1}{\Qcoh{X}}{(i_x)_*J(x)}{\K^1}=0$ holds for every $x\in
  X$. This can be verified locally, and every localization
  $\Tor{1}{\Qcoh{X}}{(i_x)_*J(x)}{\K^1}_{x'}$ is $0$ or isomorphic to
  $\Tor{1}{\Rx}{J(x)}{\K^1_x}$, and the latter is also $0$ as $\K^1_x$
  is a Gorenstein flat $\Rx$-module.

  Repeating this process, one gets an exact sequence of sheaves
  \begin{equation}
    \label{eq:a3}
    0 \lra \M \lra \F^0 \lra \F^1 \lra \F^2 \lra \cdots
  \end{equation}
  which remains exact after tensoring with any injective sheaf on
  $X$. Since $X$, in particular, is semi-separated quasi-compact,
  every sheaf is a homomorphic image of a flat sheaf; see for example
  Efimov and Positselski \cite[Lemma~A.1]{AIELPs15}.  Therefore, there
  is an exact sequence
  \begin{equation}
    \label{eq:a4}
    0\lra \K^{-1}\lra \F^{-1} \lra \M \lra 0
  \end{equation}
  with $\F^{-1}$ a flat sheaf.  The class of Gorenstein flat modules
  is closed under kernels of epimorphisms, see e.g.\
  \cite[Corollary~4.12]{JSrJSt20}, so $\K^{-1}_x$ is a Gorenstein flat
  $\Rx$-module for every $x \in X$. By the same argument as above the
  sequence \eqref{eq:a4} remains exact after tensoring with any
  injective sheaf on $X$. Repeating this process, one obtains an exact
  sequence
  \begin{equation}
    \label{eq:a5}
    \cdots \lra \F^{-3} \lra \F^{-2} \lra \F^{-1} \lra \M \lra 0
  \end{equation}
  that remains exact after tensoring with any injective sheaf on
  $X$. Splicing together \eqref{eq:a3} and \eqref{eq:a5} one gets per
  Definition \ref{dfn:GF} an \Ftac\ complex of flat sheaves,
  $\F = \cdots \to \F^{-1} \to \F^0 \to \F^1 \to \cdots$. Thus,
  $\M = \Cy[0]{\F}$ is Gorenstein flat.
\end{proof}

Henceforth we work mainly over semi-separated noetherian schemes. In
that setting we consistently refer to the sheaves described in
Theorem~\ref{thm:equiv_Noetss} by their shortest name: Gorenstein
flat; some proofs, though, rely crucially on their local properties.

%%%%%%%%%%%%%%%%%%%%%%%%%%%%%%%%%%%%%%%%%%%%%%%%%%%%%%%%%%%%%%%%%%%%%%

\section{The Gorenstein flat model structure on $\Qcoh{X}$}

\noindent
Let $\calG$ be a Grothendieck category, that is, an abelian category
that has colimits, exact direct limits (filtered colimits), and a
generator.  A class $\calC$ of objects in $\calG$ is called
\emph{resolving} if it contains all projective objects and is closed
under extensions and kernels of epimorphisms. To a class $\calC$ of
objects in $\calG$ one associates the orthogonal classes
\begin{align*}
  \calC^\perp &\deq \setof{G\in \calG}
                {\Ext{1}{\calG}{C}{G}=0\ \text{for all } C\in \calC}\ \text{ and}\\
  ^{\perp}\calC &\deq \setof{G\in \calG} {\Ext{1}{\calG}{G}{C}=0\
                  \text{for all } C\in \calC}\:.
\end{align*}
Let $\calS \subseteq \calC$ be a set. The pair $(\calC,\calC^{\perp})$
is said to be \emph{generated by the set $\calS$} if an object $G$
belongs to $\calC^\perp $ if and only if $\Ext{1}{\calG}{C}{G}=0$
holds for all $C\in \calS$.  A pair $(\calF,\calC)$ of classes in
$\calG$ with $\calF^{\perp}=\calC$ and $^{\perp}\calC=\calF$ is called
a \emph{cotorsion pair}. The intersection $\calF \cap \calC$ is called
the \emph{core} of the cotorsion pair.

A cotorsion pair $(\calF,\calC)$ in $\calG$ is called
\emph{hereditary} if for all $F \in \calF$ and $C \in \calC$ one has
$\Ext{i}{\calG}{F}{C} = 0$ for all $i \ge 1$. Notice that the class
$\calF$ in this case is resolving.

A cotorsion pair $(\calF,\calC)$ in $\calG$ is called \emph{complete}
provided that for every $G\in \calG$ there are short exact sequences
$0\to C\to F\to G \to 0$ and $0\to G\to C'\to F'\to 0$ with
$F,F'\in \calF$ and $C,C'\in \calC$.

  \subsection*{Abelian model category structures from cotorsion pairs}
  Gillespie \cite{JGl15} shows how to construct a hereditary abelian
  model structure on $\calG$ from two comparable cotorsion
  pairs. Namely, if $(\calQ,\widetilde{\calR})$ and
  $(\widetilde{\calQ},\calR)$ are complete hereditary cotorsion pairs
  in $\calG$ with $\widetilde{\calR}\subseteq \calR$,
  $\widetilde{\calQ}\subseteq \calQ$, and
  $\calQ\cap\widetilde{\calR}=\widetilde{\calQ}\cap\calR$, then there
  exists a unique thick (i.e.\ full, closed under direct summands, and
  having the two-out-of-three property) subcategory $\calW$ of $\calG$
  such that $\widetilde{\calQ} = \calQ \cap \calW$ and
  $\widetilde{\calR} = \calR \cap \calW$. In other words
  $(\calQ,\calW,\calR)$ is a so-called \emph{Hovey triple}, and from
  work of Hovey~\cite{MHv02} it is known that there is a unique
  abelian model structure on $\calG$ in which $\calQ$, $\calR$, and
  $\calW$ are the classes of cofibrant, fibrant, and trivial objects,
  respectively; refer to \cite{MHv02} for this standard
  terminology. We are now going to apply this machine to cotorsion
  pairs with $\widetilde{\calQ}$ and $\calQ$ the categories of flat
  and Gorenstein flat sheaves on $X$.

  \begin{remark}
    \label{rmk:locfinrep}
    If $X$ is semi-separated quasi-compact, then $\Qcoh{X}$ is a
    locally finitely presentable Grothendieck category. This was
    proved already in EGA \cite[I.6.9.12]{egaIsv}, though not using
    that terminology. Being a Grothendieck category, $\Qcoh{X}$ has a
    generator and hence, by \cite[Lemma~A.1]{AIELPs15}, a flat
    generator. Sl\'avik and {\v{S}}{\soft{t}}ov{\'{\i}}{\v{c}}ek \cite{ASlJSt}
    have recently proved that if $X$ is quasi-separated and
    quasi-compact, then $\Qcoh{X}$ has a flat generator if and only if
    $X$ is semi-separated.
  \end{remark}

  \begin{theorem}
    \label{theorem.gfcotpair}
    Assume that $X$ is semi-separated noetherian. The pair
    \begin{equation*}
      (\catGF{X},\catGF{X}^\perp)
    \end{equation*}
    is a complete hereditary cotorsion pair.
  \end{theorem}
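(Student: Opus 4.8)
The plan is to build the pair as a \emph{complete} cotorsion pair first and then read off the hereditary property. The hereditary part is the cheap one: a cotorsion pair is hereditary exactly when its left-hand class is resolving, so once we know $(\catGF{X},\catGF{X}^\perp)$ is a cotorsion pair it suffices to see that $\catGF{X}$ is closed under extensions and under kernels of epimorphisms between its members. Both are stalk-local conditions by Theorem~\ref{thm:equiv_Noetss}: an extension or a kernel sequence of sheaves is exact if and only if it is exact at every stalk, and localization is exact and preserves flatness, so the question reduces to the corresponding closure properties of $\catGF{\Rx}$. These hold because $\catGF{\Rx}$ is resolving by \cite[Theorem~3.11 and Corollary~3.12]{JSrJSt}. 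The same stalkwise reduction, using that stalks are filtered colimits, shows that $\catGF{X}$ is closed under direct limits and hence under transfinite extensions.

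For completeness I would invoke the Eklof--Trlifaj machinery in $\Qcoh{X}$, which by Remark~\ref{rmk:locfinrep} is a locally finitely presentable Grothendieck category with a flat generator. The decisive step is to exhibit $\catGF{X}$ as a \emph{deconstructible} class, i.e.\ to produce a set $\mathcal{S}\subseteq\catGF{X}$ such that every Gorenstein flat sheaf is $\mathcal{S}$-filtered (a union of a continuous ascending chain with successive quotients in $\mathcal{S}$). Granting this, Eklof's lemma gives $\mathrm{Filt}(\mathcal{S})\subseteq{}^\perp(\mathcal{S}^\perp)$, while closure of $\catGF{X}$ under transfinite extensions and summands yields $\mathrm{Filt}(\mathcal{S})=\catGF{X}$; since the flat generator of $\Qcoh{X}$ is Gorenstein flat, $\mathcal{S}$ can be taken to contain a generating set, and the cotorsion pair cogenerated by $\mathcal{S}$ is then complete with left-hand class $\catGF{X}$. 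This is precisely the deconstructibility-to-model-structure passage used in \cite{JGl15,MHv02}, and it simultaneously furnishes special $\catGF{X}$-precovers and $\catGF{X}^\perp$-preenvelopes.

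I expect the deconstructibility to be the genuine obstacle, and I would prove it by a Kaplansky-type argument performed on \Ftac\ complexes rather than on cycle sheaves directly. Fix a regular cardinal $\kappa$ bounding a generating set of $\Qcoh{X}$ together with the relevant presentation data. Given $\M=\Cy[0]{\F}$ with $\F$ an \Ftac\ complex of flat sheaves and a $\kappa$-presentable subsheaf to capture, the aim is to find a $\kappa$-presentable subcomplex $\G\subseteq\F$ that is again \Ftac\ and for which both $\Cy[0]{\G}$ and $\Cy[0]{\F/\G}$ are Gorenstein flat. The delicate requirement is that $\G$ stay exact after tensoring with every injective sheaf; here I would reduce to the stalks exactly as in the proof of Theorem~\ref{thm:equiv_Noetss}, writing an injective sheaf as $\I\is\bigoplus_{x}(i_x)_*J(x)^{(\Lambda_x)}$ via \cite[Proposition~II.7.17]{rad}, so that tensor-exactness against $\I$ becomes the condition that each stalk complex $\G_x$ remain exact after tensoring with the injective hulls $J(x)$. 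That condition is exactly what is controlled by the Kaplansky-class property of $\catGF{\Rx}$ from \cite{JSrJSt}. Running the enlargement simultaneously in all degrees and closing off along a chain of length $\kappa$ produces the desired small \Ftac\ subcomplex, whose degree-zero cycle is a $\kappa$-presentable Gorenstein flat subsheaf of $\M$ with Gorenstein flat quotient; this is the Kaplansky condition, and it delivers the deconstructibility needed above.
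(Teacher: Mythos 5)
Your proposal is correct in outline, but it takes a genuinely different route from the paper's proof: you rebuild the local-to-global deconstructibility by hand, whereas the paper outsources it. The paper observes that for each open affine $U\subseteq X$ the pair $(\catGF{U},\catGF{U}^\perp)$ is a complete hereditary cotorsion pair generated by a set $\mathcal{S}_U$ (Enochs--Jenda--L\'opez-Ramos \cite[Theorems 2.11 and 2.12]{EJL-04}, see also \cite[Corollary~3.12]{JSrJSt}), and then feeds these sets into the machinery of Estrada--Guil Asensio--Prest--Trlifaj \cite[Corollary 3.15]{EGPT-12}, taking as quiver all open affine subsets of $X$, to conclude at once that $(\catGF{X},\catGF{X}^\perp)$ is a complete cotorsion pair; hereditariness then follows because each $\catGF{U}$ is resolving, hence $\catGF{X}$ is resolving, and a resolving left class containing a generator yields a hereditary pair by \cite[Lemma 4.25]{MSrJSt11}. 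Your plan replaces the appeal to \cite{EGPT-12} by a direct Kaplansky-type proof that $\catGF{X}$ is deconstructible, carried out on \Ftac\ complexes and verified stalkwise, followed by the Eklof--Trlifaj completeness argument; this amounts to re-deriving, in this special case, exactly what \cite[Corollary 3.15]{EGPT-12} provides. The paper's route buys brevity and safety, since all the set-theoretic bookkeeping is concentrated in a quoted result; your route buys self-containedness and an explicit description of the filtrations, at the cost of carrying out that bookkeeping yourself.

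Two caveats on your version. First, your closing-off sketch omits the purity requirement: for a subcomplex $\G\subseteq\F$ to have flat terms \emph{and} for $\F/\G$ to be a complex of flat sheaves, each $\G^n$ must be (stalkwise) pure in $\F^n$, so purity has to be interleaved with acyclicity and stalkwise \textbf{F}-total acyclicity of both $\G$ and $\F/\G$, across all degrees and all points of $X$ simultaneously, with $\kappa$ chosen to exceed $|X|$ and the cardinalities of the rings $\R(U)$; this interleaving is precisely the combinatorial content packaged by \cite{EGPT-12}, so it cannot be waved through. Second, the claim that ``a cotorsion pair is hereditary exactly when its left-hand class is resolving'' is not a formality in $\Qcoh{X}$, which lacks nonzero projectives in general; one needs completeness of the pair (to dimension-shift along special precovers) or the generator hypothesis, which is exactly \cite[Lemma 4.25]{MSrJSt11}. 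Since you establish completeness before hereditariness, your order of argument is sound, but that citation (or an equivalent argument) is genuinely needed there.
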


\begin{proof}
  For an open affine subset $U \subseteq X$ we write $\catGF{U}$ for
  the category $\catGF{\R(U)}$ of Gorenstein flat $\R(U)$-modules.
  For every open affine subset $U \subseteq X$ the pair
  $(\catGF{U},\catGF{U}^\perp)$ is a complete hereditary cotorsion
  pair; see Enochs, Jenda, and L\'opez-Ramos \cite[Theorems 2.11 and
  2.12]{EJL-04}. The proof of \cite[Theorem~2.11]{EJL-04} shows that
  the pair is generated by a set $\mathcal{S}_U$; see also the more
  precise statement in \cite[Corollary~4.12]{JSrJSt20}.

  A result of Estrada, Guil Asensio, Prest, and Trlifaj
  \cite[Corollary 3.15]{EGPT-12} now shows that
  $(\catGF{X},\catGF{X}^\perp)$ is a complete cotorsion pair. Indeed,
  the flat generator of $\Qcoh{X}$ belongs to $\catGF{X}$. As the
  quiver in \cite[Notation~3.12]{EGPT-12} one takes the quiver with
  vertices all open affine subsets of $X$, and the class $\mathcal{L}$
  in \cite[Corollary 3.15]{EGPT-12} is in this case
  \begin{equation*}
    \mathcal{L} = \{\LL \in \Qcoh{X} \mid \LL(U) \in \mathcal{S}_U
    \text{ for every open affine subset } U \subseteq X \}\:.
  \end{equation*}
  Moreover since $(\catGF{U},\catGF{U}^\perp)$ is hereditary, the
  class $\catGF{U}$ is resolving for every open affine subset
  $U\subseteq X$. It follows that $\catGF{X}$ is also resolving,
  whence $(\catGF{X},\catGF{X}^\perp)$ is hereditary as $\catGF{X}$
  contains a generator; see Saor\'{\i}n and \v{S}{\soft{t}}ov{\'{\i}}{\v{c}}ek
  \cite[Lemma 4.25]{MSrJSt11}.
\end{proof}

Let $X$ be a semi-separated noetherian scheme. By $\FlatX$ we denote
the category of flat sheaves on $X$.  The proof of the next result is
modeled on an argument due to Estrada, Iacob, and P\'erez
\cite[Proposition 4.1]{EIP}.

\begin{lemma}
  \label{lem.compatibility}
  Assume that $X$ is semi-separated noetherian. In $\Qcoh{X}$ one has
  \begin{equation*}
    \catGF{X}\cap\catGF{X}^\perp=\FlatX\cap\FlatX^\perp\:.
  \end{equation*}
\end{lemma}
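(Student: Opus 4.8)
The goal is to show the equality $\catGF{X}\cap\catGF{X}^\perp=\FlatX\cap\FlatX^\perp$, where $\catGF{X}^\perp$ are the cotorsion Gorenstein flat sheaves and $\FlatX^\perp=\catCot{X}$ are the cotorsion sheaves. Both sides describe sheaves that are simultaneously (Gorenstein) flat and cotorsion, so the content is that a Gorenstein flat cotorsion sheaf is automatically flat, and conversely.

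The plan is to prove two inclusions. The inclusion $\FlatX\cap\FlatX^\perp\subseteq\catGF{X}\cap\catGF{X}^\perp$ should be the easy direction: every flat sheaf is Gorenstein flat, so $\FlatX\subseteq\catGF{X}$, and since $\widetilde{\calQ}\subseteq\calQ$ corresponds here to $\FlatX\subseteq\catGF{X}$, one gets $\catGF{X}^\perp\subseteq\FlatX^\perp$. Hence any sheaf in $\FlatX\cap\FlatX^\perp$ lies in $\catGF{X}$ and, being flat, is also in $\catGF{X}^\perp$. For this last point I would use that a flat sheaf is right Ext-orthogonal to every Gorenstein flat sheaf, which follows from the cotorsion pair $(\catGF{X},\catGF{X}^\perp)$ being hereditary (\ref{theorem.gfcotpair}): flat sheaves have finite Gorenstein flat dimension (indeed dimension zero) and hereditariness gives the needed Ext-vanishing.

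For the reverse inclusion, the crucial point is that a sheaf $\M$ that is both Gorenstein flat and cotorsion must be flat. I would argue this stalkwise using Theorem~\ref{thm:equiv_Noetss}: if $\M$ is Gorenstein flat then each stalk $\M_x$ is a Gorenstein flat $\Rx$-module, and I want to conclude $\M_x$ is flat. The key input is the affine analogue of this statement, namely that a Gorenstein flat cotorsion module over a commutative noetherian (local) ring is flat — this is the content of the module-level result from \cite{CETa} that the introduction invokes. The transition from the sheaf-theoretic cotorsion condition $\M\in\FlatX^\perp$ to a stalkwise (or sectionwise) cotorsion condition on modules is where I expect to lean on the structure of flat covers and the compatibility of cotorsion with localization over a semi-separated noetherian scheme; I would verify that $\M(U)$ is a cotorsion $\R(U)$-module for each open affine $U$, reducing to the affine case where $\M(U)$ is Gorenstein flat and cotorsion, hence flat, so $\M$ is flat.

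The main obstacle is the interplay between the global cotorsion condition $\M\in\FlatX^\perp$ and the local/affine cotorsion condition needed to apply the module-level result. Cotorsion does not obviously pass cleanly between sections, stalks, and the global sheaf, since $\Ext$ of sheaves is computed in $\Qcoh{X}$ rather than stalkwise, and flatness covers on $X$ are assembled from affine data via the pushforwards $(i_x)_*$ appearing in the proof of Theorem~\ref{thm:equiv_Noetss}. I would therefore model the argument on \cite[Proposition~4.1]{EIP} as the paper suggests: use the explicit description of the cotorsion pair of flat and cotorsion sheaves, together with the generation of $(\catGF{X},\catGF{X}^\perp)$ by the class $\mathcal{L}$ from the proof of Theorem~\ref{theorem.gfcotpair}, to transfer the affine equality $\catGF{U}\cap\catGF{U}^\perp=\catFlat{U}\cap\catFlat{U}^\perp$ (valid over each coherent $\R(U)$) up to the scheme. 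The careful bookkeeping of how the sets $\mathcal{S}_U$ generate the global cotorsion pair is precisely what makes the two $\perp$-classes comparable, and that comparison is what forces the two intersections to coincide.
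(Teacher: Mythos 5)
Both halves of your proposal rest on false claims, and the root cause is the same: you conflate the two orthogonal classes $\catGF{X}^\perp$ and $\catCot{X}=\FlatX^\perp$. For the inclusion $\FlatX\cap\FlatX^\perp\subseteq\catGF{X}\cap\catGF{X}^\perp$ you assert that a flat sheaf is right Ext-orthogonal to every Gorenstein flat sheaf, ``by hereditariness.'' Hereditariness of $(\catGF{X},\catGF{X}^\perp)$ only yields vanishing of higher Ext against objects \emph{already known} to lie in $\catGF{X}^\perp$; having Gorenstein flat dimension zero places an object in the \emph{left}-hand class of the pair, not the right-hand one. Concretely, over $X=\Spec{\ZZ}$ the module $\mathbb{Q}$ is flat, hence Gorenstein flat, and $\Ext{1}{\ZZ}{\mathbb{Q}}{\ZZ}\neq 0$, so the flat module $\ZZ$ is not in $\catGF{\ZZ}^\perp$. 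Thus the cotorsion hypothesis on $\M$ is indispensable, and even with it this inclusion is the \emph{hard} direction. The paper proves it as follows: completeness of $(\catGF{X},\catGF{X}^\perp)$ (Theorem~\ref{theorem.gfcotpair}) gives an exact sequence $0\lra\M\lra\G\lra\N\lra 0$ with $\G\in\catGF{X}^\perp$ and $\N\in\catGF{X}$; extension closure of $\catGF{X}$ puts $\G$ in $\catGF{X}\cap\catGF{X}^\perp$, hence $\G$ is flat by the \emph{other} inclusion (so the order of the two inclusions matters); then $\N(U)$ has flat dimension at most $1$ over every open affine $U$ and, being a Gorenstein flat module of finite flat dimension, is flat \cite[Corollary~10.3.4]{rha}; finally $\M\in\FlatX^\perp$ splits the sequence, exhibiting $\M$ as a direct summand of $\G\in\catGF{X}^\perp$.

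For the reverse inclusion your ``crucial point''---that a Gorenstein flat cotorsion sheaf must be flat---is simply not true, so no stalkwise reduction can establish it. Over $A=k[x]/(x^2)$ the module $k$ is Gorenstein flat (it is a cycle of the \Ftac\ complex $\cdots\lra A\stackrel{x}{\lra}A\stackrel{x}{\lra}A\lra\cdots$) and cotorsion (finitely generated over a complete local ring), yet not flat. Were your claim correct, Theorem~\ref{equalities} would force $\catGFC{X}$ to consist only of flat cotorsion sheaves, so the stable category in Theorem~A would vanish for every semi-separated noetherian scheme, contradicting its nontriviality for non-regular Gorenstein rings. The statement you actually need is the weaker, correct one---Gorenstein flat plus membership in $\catGF{X}^\perp$ implies flat---and it is the \emph{easy} direction: since $\M\in\catGF{X}$ is a cycle of an \Ftac\ complex, there is an exact sequence $0\lra\M\lra\F\lra\N\lra 0$ with $\F$ flat and $\N$ Gorenstein flat, and $\M\in\catGF{X}^\perp$ gives $\Ext{1}{\Qcoh{X}}{\N}{\M}=0$, so the sequence splits and $\M$ is a summand of $\F$. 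Note this uses orthogonality of $\M$ against the \emph{Gorenstein flat} sheaf $\N$, which cotorsion-ness alone does not provide---precisely the distinction your proposal erases. Set up this way, neither direction requires localizing the cotorsion condition, so the difficulties in your final paragraph never arise.
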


\begin{proof}
  ``$\subseteq $'': Let $\M\in \catGF{X}\cap \catGF{X}^\perp$.  The
  inclusion $\FlatX\subseteq \catGF{X}$ yields
  $\catGF{X}^\perp\subseteq \FlatX^\perp$, so it remains to show that
  $\M$ is flat. Since $\M$ is in $\catGF{X}$ there is an exact
  sequence in $\Qcoh{X}$,
  \begin{equation*}
    0 \lra \M \lra \F \lra \N \lra 0\:,
  \end{equation*}
  with $\F$ a flat sheaf and $\N$ a Gorenstein flat sheaf on
  $X$. Since $\M$ belongs to $\catGF{X}^\perp$ the sequence splits,
  whence $\M$ is flat.

  ``$\supseteq $'': Let $\M\in \FlatX\cap \FlatX^\perp$. As the
  inclusion $\FlatX \subseteq \catGF{X}$ holds, it remains to show
  that $\M$ is in $\catGF{X}^\perp$. Since
  $(\catGF{X},\catGF{X}^\perp)$ is a complete cotorsion pair in
  $\Qcoh{X}$, see Theorem \ref{theorem.gfcotpair}, there is an exact
  sequence in $\Qcoh{X}$,
  \begin{equation}\label{eq5}
    0\lra \M\lra \G\lra \N\lra 0 \:,
  \end{equation}
  with $\G\in \catGF{X}^\perp$ and $\N\in \catGF{X}$.  Moreover, since
  $\catGF{X}$ is closed under extensions by Theorem
  \ref{theorem.gfcotpair}, also $\G$ belongs to $\catGF{X}$. Thus the
  sheaf $\G$ is in $\catGF{X}\cap \catGF{X}^\perp$, so by the
  containment already proved $\G$ is flat. Since $\M$ is also flat, it
  follows that
  \begin{equation*}
    \operatorname{flat\,dim}_{\R(U)}\N(U)\leq 1
  \end{equation*}
  holds for every open affine subset $U \subseteq X$. Thus $\N(U)$ is
  a Gorenstein flat $\R(U)$-module of finite flat dimension and,
  therefore, flat; see \cite[Corollary 10.3.4]{rha}. It follows that
  $\N$ is a flat sheaf. Since $\M\in \FlatX^\perp$ by assumption, the
  sequence (\ref{eq5}) splits. Therefore, $\M$ is a direct summand of
  $\G$ and thus in $\catGF{X}^\perp$.
\end{proof}

We call sheaves in the subcategory $\catCot{X} = \FlatX^\perp$ of
$\Qcoh{X}$ \emph{cotorsion.} Sheaves in the intersection
$\FlatX \cap \catCot{X}$ are called \emph{flat-cotorsion.}

\begin{remark}
  \label{rmk:cotpair}
  Assume that $X$ is semi-separated quasi-compact. In this case the
  category $\FlatX$ contains a generator for $\Qcoh{X}$, so it follows
  from work of Enochs and Estrada \cite[Corollary~4.2]{EEnSEs05} that
  $(\FlatX,\catCot{X})$ is a complete cotorsion pair, and since
  $\FlatX$ is resolving it follows from \cite[Lemma 4.25]{MSrJSt11}
  that the pair $(\FlatX,\catCot{X})$ is hereditary. This fact can
  also be deduced from work of Gillespie \cite[Proposition~6.4]{JGl07}
  and Hovey \cite[Corollary~6.6]{MHv02}.
\end{remark}

The next theorem establishes what we call the \emph{Gorenstein flat
  model structure} on $\Qcoh{X}$; it may be regarded as a non-affine
version of \cite[Theorem 3.3]{JGl17}.

\begin{theorem}
  \label{theorem.modelGflat}
  Assume that $X$ is semi-separated noetherian. There exists a unique
  abelian model structure on $\Qcoh{X}$ with $\catGF{X}$ the class of
  cofibrant objects and $\catCot X$ the class of fibrant objects. In
  this structure $\FlatX$ is the class of trivially cofibrant objects
  and $\catGF{X}^\perp$ is the class of trivially fibrant objects.
\end{theorem}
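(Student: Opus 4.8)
The plan is to apply the Gillespie--Hovey construction recalled at the beginning of this section to the two cotorsion pairs already at hand. Concretely, I would set $\calQ = \catGF{X}$ with $\widetilde{\calR} = \catGF{X}^\perp$, and $\calR = \catCot{X}$ with $\widetilde{\calQ} = \FlatX$, so that the two candidate cotorsion pairs are $(\calQ,\widetilde{\calR}) = (\catGF{X},\catGF{X}^\perp)$ and $(\widetilde{\calQ},\calR) = (\FlatX,\catCot{X})$. The task then reduces to checking the four hypotheses needed to produce a Hovey triple $(\calQ,\calW,\calR)$, and hence the asserted model structure.

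First, both pairs are complete hereditary cotorsion pairs: this is Theorem~\ref{theorem.gfcotpair} for $(\catGF{X},\catGF{X}^\perp)$ and Remark~\ref{rmk:cotpair} for $(\FlatX,\catCot{X})$. Second, the inclusion $\widetilde{\calQ}\subseteq\calQ$ is the containment $\FlatX\subseteq\catGF{X}$, which is immediate since every flat sheaf is Gorenstein flat. Third, the inclusion $\widetilde{\calR}\subseteq\calR$, namely $\catGF{X}^\perp\subseteq\catCot{X}$, follows by taking right Ext-orthogonals of $\FlatX\subseteq\catGF{X}$, as this gives $\catGF{X}^\perp\subseteq\FlatX^\perp=\catCot{X}$.

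The fourth and decisive hypothesis is the compatibility condition $\calQ\cap\widetilde{\calR}=\widetilde{\calQ}\cap\calR$, that is,
\begin{equation*}
  \catGF{X}\cap\catGF{X}^\perp=\FlatX\cap\catCot{X}\:.
\end{equation*}
This is precisely Lemma~\ref{lem.compatibility} (recalling that $\catCot{X}=\FlatX^\perp$), and it is where essentially all the real work sits; the remaining hypotheses are formal. With all four conditions in place, the cited construction yields a unique thick subcategory $\calW$ making $(\catGF{X},\calW,\catCot{X})$ a Hovey triple, together with the unique abelian model structure on $\Qcoh{X}$ whose cofibrant, fibrant, and trivial objects are $\catGF{X}$, $\catCot{X}$, and $\calW$, respectively.

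Finally, to read off the last two assertions I would use the defining property of the triple: the trivially cofibrant objects form $\calQ\cap\calW = \widetilde{\calQ} = \FlatX$, and the trivially fibrant objects form $\calR\cap\calW = \widetilde{\calR} = \catGF{X}^\perp$. I expect no genuine obstacle beyond invoking Lemma~\ref{lem.compatibility}; at the level of this theorem the argument is simply a verification of the hypotheses of the Gillespie--Hovey machine, with the substantive content having been isolated in that lemma.
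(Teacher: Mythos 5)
Your proposal is correct and follows essentially the same route as the paper's proof: both verify the hypotheses of Gillespie's construction for the pairs $(\catGF{X},\catGF{X}^\perp)$ and $(\FlatX,\catCot{X})$, with the compatibility of cores supplied by Lemma~\ref{lem.compatibility}, and then invoke Hovey's correspondence to obtain the model structure. Your write-up is merely more explicit about the formal inclusions (which are indeed automatic), so there is nothing to add.
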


\begin{proof}
  It follows from Theorem~\ref{theorem.gfcotpair} and
  Remark~\ref{rmk:cotpair}, that $(\catGF{X},\catGF{X}^\perp)$ and
  $(\FlatX,\catCot{X})$ are complete hereditary cotorsion pairs. Every
  flat sheaf is Gorenstein flat, and by Lemma~\ref{lem.compatibility}
  the two pairs have the same core, so they satisfy the conditions in
  \cite[Theorem 1.2]{JGl15}. Thus the pairs determine a Hovey triple,
  and by \cite[Theorem~2.2]{MHv02} a unique abelian model category
  structure on $\Qcoh{X}$ with fibrant and cofibrant objects as
  asserted.
\end{proof}

\begin{corollary}
  \label{corollary.modelGflat}
  Assume that the scheme $X$ is semi-separated noetherian. The
  category $\catCot{X} \cap \catGF{X}$ is Frobenius and the
  projective--injective objects are the flat-cotorsion sheaves. Its
  associated stable category is equivalent to the homotopy category of
  the Gorenstein flat model structure.
\end{corollary}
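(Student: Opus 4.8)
The plan is to deduce everything from the model structure produced in Theorem~\ref{theorem.modelGflat} together with the standard dictionary between abelian model structures and exact/Frobenius structures. Recall that for a Hovey triple $(\calQ,\calW,\calR)$ on an abelian category, the full subcategory $\calQ\cap\calR$ of bifibrant objects carries a Frobenius exact structure whose projective--injective objects are exactly $\calQ\cap\calW\cap\calR = \calW\cap(\calQ\cap\calR)$, and the associated stable category is equivalent to the homotopy category of the model structure. This is the content of Gillespie's work; I would cite \cite{JGl11} (or the relevant statement in \cite{JGl15}) for this equivalence and spend the proof merely identifying the three classes in our situation.

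First I would read off the Hovey triple from Theorem~\ref{theorem.modelGflat}: the cofibrant objects form $\calQ=\catGF{X}$, the fibrant objects form $\calR=\catCot{X}$, and the trivial objects form the thick class $\calW$ determined by $\FlatX=\catGF{X}\cap\calW$ and $\catGF{X}^\perp=\catCot{X}\cap\calW$. Consequently the bifibrant objects are
\begin{equation*}
  \calQ\cap\calR=\catGF{X}\cap\catCot{X}\:,
\end{equation*}
which is precisely the category named in the statement, and it inherits a Frobenius exact structure by the cited dictionary. The exact structure here is the one in which the admissible short exact sequences are those that are componentwise short exact in $\Qcoh{X}$ with all terms bifibrant; I would note that this is an exact category because $\catGF{X}\cap\catCot{X}$ is closed under extensions and under the relevant kernels and cokernels, both of which follow from the two pairs being hereditary (Theorem~\ref{theorem.gfcotpair} and Remark~\ref{rmk:cotpair}).

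Next I would identify the projective--injective objects. By the general theory these are the bifibrant objects that are also trivial, i.e.\ the objects of $\calW\cap\calQ\cap\calR$. Using $\FlatX=\catGF{X}\cap\calW$ one gets $\calW\cap\catGF{X}\cap\catCot{X}=\FlatX\cap\catCot{X}$; equivalently, using $\catGF{X}^\perp=\catCot{X}\cap\calW$ together with Lemma~\ref{lem.compatibility} one obtains the same class, since $\catGF{X}\cap\catGF{X}^\perp=\FlatX\cap\catCot{X}$. Either computation shows that the projective--injective objects are exactly the flat cotorsion sheaves, as claimed. Finally, the equivalence between the associated stable category and the homotopy category of the Gorenstein flat model structure is immediate from the same dictionary, so no further argument is needed.

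I expect the only real subtlety to lie in pinning down which exact structure on $\catGF{X}\cap\catCot{X}$ is intended and in quoting the precise form of the correspondence between Hovey triples and Frobenius categories; the identifications of the classes are then formal manipulations with the defining equalities of the Hovey triple and Lemma~\ref{lem.compatibility}. The main obstacle, such as it is, will be bibliographic rather than mathematical: ensuring that the cited result about bifibrant objects of a hereditary abelian model structure forming a Frobenius category with the stated stable category is invoked in exactly the generality we need.
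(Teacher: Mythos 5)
Your proposal is correct and takes essentially the same route as the paper: the paper's proof simply applies Gillespie's results on exact model structures (\cite[Proposition 5.2(4)]{JGl11} for the Frobenius structure on the bifibrant objects $\catCot{X}\cap\catGF{X}$ with projective--injectives $\calQ\cap\calW\cap\calR = \FlatX\cap\catCot{X}$, and \cite[Corollary 5.4]{JGl11} for the equivalence with the homotopy category) to the Gorenstein flat model structure of Theorem~\ref{theorem.modelGflat}. Your explicit unwinding of the Hovey triple and the use of Lemma~\ref{lem.compatibility} to identify the projective--injective objects is exactly the computation implicit in the paper's citation.
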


\begin{proof}
  Applied to the Gorenstein flat model structure from the theorem,
  \cite[Proposition 5.2(4)]{JGl11} shows that
  $\catCot{X} \cap \catGF{X}$ is a Frobenius category with the stated
  projective--injective objects. The last assertion follows from
  \cite[Corollary 5.4]{JGl11}.
\end{proof}

%%%%%%%%%%%%%%%%%%%%%%%%%%%%%%%%%%%%%%%%%%%%%%%%%%%%%%%%%%%%%%%%%%%%%%

\section{Acyclic complexes of cotorsion sheaves}

\noindent
We assume throughout this section that $X$ is semi-separated
quasi-compact. The category of cochain complexes of sheaves on $X$ is
denoted $\Ch{\Qcoh{X}}$. The goal is to establish a result,
Theorem~\ref{acyc.cotor} below, which in the affine case is proved by
Bazzoni, Cort\'es Izurdiaga, and Estrada \cite[Theorem 1.3]{BCE}. It
says, in part, that every acyclic complex of cotorsion sheaves has
cotorsion cycles. Our proof is inspired by arguments of Hosseini
\cite{EHs19} and \v{S}{\soft{t}ov{\'{\i}}{\v{c}}ek \cite{JSt}.

Let $\catFlatTilde$ denote the full subcategory of $\Ch{\Qcoh{X}}$
whose objects are the acyclic complexes $\F$ of flat sheaves with
$\Cy[n]{\F}\in \catFlat{X}$ for every $n\in\ZZ$; similarly, let
$\catCotTilde$ denote the full subcategory whose objects are the
acyclic complexes $\C$ of cotorsion sheaves with
$\Cy[n]{\C}\in \catCot{X}$ for every $n\in \ZZ$. Further,
$\catdgCot{X}$ denotes the category of complexes $\C$ of cotorsion
sheaves with the property that the total Hom complex $\Homab{\F}{\C}$
of abelian groups is acyclic for every complex $\F\in
\catFlatTilde$. In the literature such complexes are referred to as
dg- or semi-cotorsion complexes; it is part of
Theorem~\ref{acyc.cotor} that every complex of cotorsion sheaves on
$X$ has this property.

\begin{remark}
  \label{fc}
  The pair $(\catFlatTilde,\catdgCot{X})$ is by
  \cite[Theorem~6.7]{JGl07} and \cite[Theorem~2.2]{MHv02} a complete
  cotorsion pair in $\Ch{\Qcoh{X}}$.
\end{remark}

For complexes $\A$ and $\B$ of sheaves on $X$, let $\Homab{\A}{\B}$
denote the standard total Hom complex of abelian groups. There is an
isomorphism
\begin{equation}
  \label{eq:11}
  \Ext{1}{\Ch{\Qcoh{X}},\mathrm{dw}}{\A}{\Sigma^{n-1}\B}
  \cong \H^n\Homab{\A}{\B}\:,
\end{equation}
where $\Ext{1}{\Ch{\Qcoh{X}},\mathrm{dw}}{\A}{\Sigma^{n-1}\B}$ is the
subgroup of $\Ext{1}{\Ch{\Qcoh{X}}}{\A}{\Sigma^{n-1}\B}$ consisting of
degreewise split short exact sequences; see e.g.\ \cite[Lemma
2.1]{JGl04}. For a complex $\F$ of flat sheaves and a complex $\C$ of
cotorsion sheaves, every extension $0 \to \C \to \X \to \F \to 0$ is
degreewise split, so \eqref{eq:11} reads
\begin{equation}
  \label{eq:11a}
  \Ext{1}{\Ch{\Qcoh{X}}}{\F}{\Sigma^{n-1}\C} \cong \H^n\Homab{\F}{\C}\:.
\end{equation}

\begin{lemma}
  \label{lemma.stov}
  Let $(\F_\lambda)_{\lambda\in \Lambda}$ be a direct system of
  complexes in $\catFlatTilde$. If each complex $\F_\lambda$ is
  contractible, then
  \begin{equation*}
    \Ext{1}{\Ch{\Qcoh{X}}}{\varinjlim_{\lambda \in \Lambda} \F_\lambda}{\C}
    \cong \H^1\Homab{\varinjlim_{\lambda \in \Lambda} \F_\lambda}{\C} = 0
  \end{equation*}
  holds for every complex $\C$ of cotorsion sheaves on $X$.
\end{lemma}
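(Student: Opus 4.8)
The plan is to peel off the two asserted relations by standard reductions and then to concentrate all the work on a single cohomology-vanishing statement for an inverse limit of contractible complexes. First I would observe that $\varinjlim_{\lambda} \F_\lambda$ again lies in $\catFlatTilde$: filtered colimits in $\Qcoh{X}$ are exact and preserve flatness, so $\varinjlim_{\lambda} \F_\lambda$ is an acyclic complex of flat sheaves whose cycles $\Cy[n]{\varinjlim_{\lambda} \F_\lambda} \cong \varinjlim_{\lambda} \Cy[n]{\F_\lambda}$ are flat. In particular $\varinjlim_{\lambda} \F_\lambda$ is a complex of flat sheaves, so \eqref{eq:11a} with $n = 1$ delivers the displayed isomorphism. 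It then remains to prove that $\H^1\Homab{\varinjlim_{\lambda} \F_\lambda}{\C} = 0$.

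Next I would turn the colimit into a limit. Since $\Homab{-}{\C}$ sends colimits to limits in each degree and products commute with inverse limits, there is an isomorphism of complexes of abelian groups $\Homab{\varinjlim_{\lambda} \F_\lambda}{\C} \cong \varprojlim_{\lambda}\Homab{\F_\lambda}{\C}$. Because each $\F_\lambda$ is contractible, each $\Homab{\F_\lambda}{\C}$ is contractible as well, hence acyclic and degreewise split. Thus $\H^{n}\Homab{\F_\lambda}{\C} = 0$ for all $n$ and all $\lambda$, and the problem is reduced to showing that forming $\H^1$ commutes with $\varprojlim_{\lambda}$ for this particular system. Granting a Milnor-type exact sequence
\begin{equation*}
  0 \lra {\varprojlim_{\lambda}}^{1} \H^{0}\Homab{\F_\lambda}{\C}
  \lra \H^1\Big(\varprojlim_{\lambda}\Homab{\F_\lambda}{\C}\Big)
  \lra \varprojlim_{\lambda}\H^{1}\Homab{\F_\lambda}{\C} \lra 0\:,
\end{equation*}
the two outer terms vanish by the previous sentence, and hence so does the middle term, which is exactly what we want.

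The one real obstacle is the validity of this $\varprojlim$--$\varprojlim^{1}$ sequence: for an arbitrary directed index set $\Lambda$ the higher derived functors ${\varprojlim}^{i}$ need not vanish, so one cannot blithely exchange $\H^1$ with $\varprojlim_{\lambda}$. This is exactly where contractibility must be spent rather than merely recorded. The plan is to use the degreewise split structure of the complexes $\Homab{\F_\lambda}{\C}$ to show that the inverse system is Mittag--Leffler---indeed degreewise split surjective after a cofinal reindexing---so that $\varprojlim_{\lambda}$ is exact on it and is computed by the expected $\varprojlim$--$\varprojlim^{1}$ sequence; reducing a general filtered system to a suitable well-ordered continuous chain is the device that makes this precise. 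I expect this reduction, and not the formal cancellation of cohomology, to be the delicate point, and it is where the arguments of Hosseini and \v{S}tov\'{\i}\v{c}ek are indispensable.
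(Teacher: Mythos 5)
Your opening reductions are fine: $\varinjlim_\lambda \F_\lambda$ does lie in $\catFlatTilde$, the identification $\Ext{1}{\Ch{\Qcoh{X}}}{\varinjlim_\lambda \F_\lambda}{\C} \cong \H^1\Homab{\varinjlim_\lambda \F_\lambda}{\C}$ via \eqref{eq:11a} is correct, the isomorphism $\Homab{\varinjlim_\lambda \F_\lambda}{\C} \cong \varprojlim_\lambda \Homab{\F_\lambda}{\C}$ is just the universal property, and each $\Homab{\F_\lambda}{\C}$ is indeed contractible. But the entire proof then rests on the final step, and that step has a genuine gap which I do not believe can be repaired along the lines you sketch. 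First, the Milnor sequence $0 \to {\varprojlim}^{1} \H^{0} \to \H^{1}(\varprojlim) \to \varprojlim \H^{1} \to 0$ is a statement about \emph{towers}, i.e.\ inverse systems indexed by $\mathbb{N}$ with Mittag--Leffler (e.g.\ degreewise surjective) transition maps; for a general directed set $\Lambda$ there is no such two-term sequence, only a spectral sequence involving all the higher derived limits ${\varprojlim}^{i}$. Second, your claimed Mittag--Leffler property is unjustified: contractibility of each $\F_\lambda$ is a property of the \emph{objects} and puts no constraint whatsoever on the transition maps $\F_\lambda \to \F_\mu$, so the maps $\Homab{\F_\mu}{\C} \to \Homab{\F_\lambda}{\C}$ are just arbitrary maps of contractible complexes; nothing makes them surjective. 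Third, and most fatally, even if the system were degreewise surjective, reindexing by a well-ordered cofinal subset only helps when $\Lambda$ has \emph{countable} cofinality: over uncountable directed sets the Mittag--Leffler condition does not force ${\varprojlim}^{1}$ to vanish, and higher ${\varprojlim}^{i}$ obstructions appear as well. Since the lemma is needed for arbitrary direct systems (in the proof of Theorem~\ref{acyc.cotor} the index sets come from writing $\F(U_{i_0,\ldots,i_p})$ as a direct limit of contractible complexes, with no cardinality control), an argument that only works for countable cofinality does not suffice.

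The structural reason your dualization fails is that passing to inverse limits throws away the only exactness available: filtered \emph{colimits} are exact in $\Qcoh{X}$, inverse limits of abelian groups are not. The paper therefore stays on the colimit side. Using that $\Qcoh{X}$ is locally finitely presentable, it invokes \v{S}tov\'{\i}\v{c}ek's presentation of a direct limit as a pure quotient: there is an exact sequence $0 \to \K \to \bigoplus_{\lambda} \F_\lambda \to \varinjlim_{\lambda} \F_\lambda \to 0$ in which $\K$ admits a transfinite filtration whose successive quotients are \emph{finite} direct sums of the $\F_\lambda$. Applying $\Homab{-}{\C}$ gives a short exact sequence of complexes (degreewise splitness holds because the terms of $\varinjlim_\lambda \F_\lambda$ are flat and those of $\C$ are cotorsion); the middle complex is acyclic since $\bigoplus_\lambda \F_\lambda$ is contractible, and $\Homab{\K}{\C}$ is acyclic because, by \eqref{eq:11a}, its cohomologies are groups $\Ext{1}{\Ch{\Qcoh{X}}}{\K}{\Sigma^{n-1}\C}$, which vanish by Eklof's lemma applied to the filtration of $\K$. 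Acyclicity of the left-hand complex then follows from the long exact cohomology sequence. So the transfinite/well-ordered device you correctly anticipate as the crux is applied to a \emph{filtration of a subobject of a coproduct} (where Eklof's lemma applies), not to an inverse system (where no analogue of Eklof's lemma exists). If you want to salvage your write-up, this replacement of ``$\varprojlim$ of Homs'' by ``Hom out of a filtered colimit presentation'' is exactly the missing idea.
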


\begin{proof}
  The category $\Qcoh{X}$ is locally finitely presentable and,
  therefore, finitely accessible; see Remark~\ref{rmk:locfinrep}. It
  follows that the results, and arguments, in \cite{JSt} apply.  The
  argument in the proof of \cite[Proposition 5.3]{JSt} yields an exact
  sequence,
  \begin{equation}
    \label{eq1}
    0 \lra \K \lra \bigoplus_{\lambda \in \Lambda} \F_\lambda \lra
    \varinjlim_{\lambda \in \Lambda} \F_\lambda \lra 0\:,
  \end{equation}
  where $\K$ is filtered by finite direct sums of complexes
  $\F_\lambda$. That is, there is an ordinal number $\beta$ and a
  filtration $(\K_\alpha \mid \alpha < \beta)$, where $\K_0=0$,
  $\K_\beta = \K$, and
  $\K_{\alpha+1}/\K_\alpha\cong \bigoplus_{\lambda \in
    J_\alpha}\F_\lambda$ for $\alpha < \beta$ and $J_\alpha$ a finite
  set.

  Let $\C$ be a complex of cotorsion sheaves on $X$. As
  $\catFlatTilde$ is closed under direct limits, one has
  $\varinjlim\F_\lambda \in \catFlatTilde$. Thus,
  $\Ext{1}{\Qcoh{X}}{(\varinjlim\F_\lambda)^i}{\C^{j}}=0$ holds for
  all $i,j\in\mathbb{Z}$, whence there is an exact sequence of
  complexes of abelian groups:
  \begin{equation}
    \label{eq22}
    0 \lra \Homab{\varinjlim_{\lambda\in \Lambda} \F_\lambda}{\C} \lra
    \Homab{\bigoplus_{\lambda\in \Lambda} \F_\lambda}{\C}
    \lra  \Homab{\K}{\C} \lra 0\:.
  \end{equation}
  By \eqref{eq:11a} it now suffices to show that the left-hand complex
  in this sequence is acyclic. The middle complex is acyclic because
  each complex $\F_\lambda$ and, therefore, the direct sum
  $\bigoplus \F_\lambda$ is contractible. Thus it is enough to prove
  that $\Homab{\K}{\C}$ is acyclic. Since $\FlatX$ is resolving, it
  follows from \eqref{eq1} that $\K$ is a complex of flat sheaves.  As
  $\C$ is a complex of cotorsion sheaves, \eqref{eq:11a} yields
  \begin{equation*}
    \H^n\Homab{\K}{\C} \cong
    \Ext{1}{\Ch{\Qcoh{X}}}{\K}{\Sigma^{n-1}\C}\:.
  \end{equation*}
  Hence, it suffices to show that
  $\Ext{1}{\Ch{\Qcoh{X}}}{\K}{\Sigma^{n-1}\C}=0$ holds for all
  $n\in \ZZ$.  Let $(\K_{\alpha}\mid \alpha\leq \lambda)$ be the
  filtration of $\K$ described above. For every $n\in\ZZ$ one has
  \begin{equation*}
    \Ext{1}{\Ch{\Qcoh{X}}}{\bigoplus_{\lambda \in J_{\alpha}}\F_\lambda}{\Sigma^{n-1}\C}=0\:,
  \end{equation*}
  so Eklof's lemma \cite[Proposition 2.10]{JSt} yields
  $\Ext{1}{\Ch{\Qcoh{X}}}{\K}{\Sigma^{n-1}\C}=0$.
\end{proof}

\begin{theorem}
  \label{acyc.cotor}
  Assume that $X$ is semi-separated quasi-compact.  Every complex of
  cotorsion sheaves on $X$ belongs to $\catdgCot{X}$, and every
  acyclic complex of cotorsion sheaves belongs to $\catCotTilde$.
\end{theorem}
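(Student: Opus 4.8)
The plan is to treat the two assertions in turn, deriving the first from Lemma~\ref{lemma.stov} and reducing the second to a periodicity statement.

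For the first assertion, let $\C$ be a complex of cotorsion sheaves; I must show that $\Homab{\F}{\C}$ is acyclic for every $\F\in\catFlatTilde$. The key input is that each such $\F$ is a direct limit of \emph{contractible} complexes in $\catFlatTilde$: since $\Qcoh{X}$ is finitely accessible (Remark~\ref{rmk:locfinrep}), the complexes in $\catFlatTilde$ are in particular pure-acyclic complexes of flat sheaves, and the arguments of \v{S}tov{\'{\i}}{\v{c}}ek \cite{JSt} (already invoked in the proof of Lemma~\ref{lemma.stov}) present any such $\F$ as $\varinjlim_\lambda\F_\lambda$ with each $\F_\lambda$ contractible and in $\catFlatTilde$. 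For every $n\in\ZZ$ the shift $\Sigma^{n-1}\C$ is again a complex of cotorsion sheaves, so Lemma~\ref{lemma.stov} applied to $\Sigma^{n-1}\C$ gives $\Ext{1}{\Ch{\Qcoh{X}}}{\F}{\Sigma^{n-1}\C}=0$, and by \eqref{eq:11a} this reads $\H^n\Homab{\F}{\C}=0$. As $n$ is arbitrary, $\Homab{\F}{\C}$ is acyclic, whence $\C\in\catdgCot{X}$.

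For the second assertion, let $\C$ be an acyclic complex of cotorsion sheaves; by the first part $\C\in\catdgCot{X}$. It suffices to prove that each cycle $\Cy[n]{\C}$ is cotorsion, that is, $\Ext{1}{\Qcoh{X}}{G}{\Cy[n]{\C}}=0$ for every flat sheaf $G$. Because the cotorsion pair $(\FlatX,\catCot{X})$ is hereditary (Remark~\ref{rmk:cotpair}), cotorsion sheaves are acyclic for $\operatorname{Ext}^{\geq1}_{\Qcoh{X}}(G,-)$, so the cotorsion coresolution $0\to\Cy[n]{\C}\to\C^n\to\C^{n+1}\to\cdots$ computes these groups and a short computation yields $\Ext{1}{\Qcoh{X}}{G}{\Cy[n]{\C}}\cong\H^{n+1}\operatorname{Hom}_{\Qcoh{X}}(G,\C)$, where $\operatorname{Hom}_{\Qcoh{X}}(G,\C)$ has $n$-th term $\operatorname{Hom}_{\Qcoh{X}}(G,\C^n)$. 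Thus the assertion is equivalent to the acyclicity of $\operatorname{Hom}_{\Qcoh{X}}(G,\C)$ for every flat $G$. I would establish this by a periodicity argument: splicing the short exact sequences $0\to\Cy[n]{\C}\to\C^n\to\Cy[n+1]{\C}\to0$ into a single exact sequence $0\to W\to B\to W\to0$ with $W=\prod_n\Cy[n]{\C}$ and $B=\prod_n\C^n$ cotorsion (as $\catCot{X}$ is closed under products), which reduces the claim---each $\Cy[n]{\C}$ being a direct summand of $W$---to the \emph{cotorsion-periodicity} statement that $W$ is cotorsion.

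The main obstacle is exactly this periodicity, the sheaf analogue of Bazzoni, Cort\'es Izurdiaga, and Estrada \cite[Theorem~4.1]{BCE}. Purely formal manipulations do not suffice: the disk--sphere exact sequences only produce isomorphisms $\Ext{1}{\Qcoh{X}}{G}{\Cy[n]{\C}}\cong\Ext{1}{\Qcoh{X}}{G}{\Cy[n+1]{\C}}$, i.e.\ genuine periodicity rather than vanishing, and one cannot realize an arbitrary flat $G$ as a cycle of a \emph{non-contractible} complex in $\catFlatTilde$, since flat sheaves need not admit flat coresolutions. Breaking the periodicity requires a limit/filtration argument in the spirit of Lemma~\ref{lemma.stov}, adapting \cite{BCE} and the techniques of Hosseini \cite{EHs} and \v{S}tov{\'{\i}}{\v{c}}ek \cite{JSt}: one exhibits the relevant extension group as governed by a direct system of contractible complexes of flat sheaves and annihilates it with Eklof's lemma \cite[Proposition~2.10]{JSt}. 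A point to handle with care is that $\Qcoh{X}$ need not satisfy Ab4${}^{\ast}$, so the product sequence above must be exploited through the fact that $\operatorname{Ext}^1_{\Qcoh{X}}(G,-)$ commutes with products rather than by naive exactness of products.
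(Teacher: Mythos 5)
Your first step contains the gap that sinks the argument: the claim that every $\F\in\catFlatTilde$ can be presented as a direct limit of \emph{contractible} complexes in $\catFlatTilde$, so that Lemma~\ref{lemma.stov} applies to it directly. Over an affine scheme this is Neeman's theorem \cite[Theorem~8.6]{ANm08}, but over a non-affine $X$ it is not available, and \v{S}tov\'{\i}\v{c}ek's results do not supply it: his identification of direct limits of contractible complexes in a finitely accessible category concerns complexes that are acyclic for \emph{categorical} purity, whereas membership in $\catFlatTilde$ (acyclicity with flat cycles) only yields acyclicity for \emph{geometric} (tensor) purity, which is strictly weaker in $\Qcoh{X}$. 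This unproved presentation is exactly the difficulty the paper's proof is built to circumvent: the authors take the finite \v{C}ech resolution $0 \to \F \to \C^0(\calU,\F) \to \cdots \to \C^d(\calU,\F) \to 0$ subordinate to a semi-separating affine cover, note that each $\C^p(\calU,\F)$ is a finite direct sum of pushforwards $f_*(\F(U_{i_0,\ldots,i_p}))$ of complexes of modules to which Neeman's affine theorem \emph{does} apply (and $f_*$ preserves split exactness and direct limits), apply Lemma~\ref{lemma.stov} to each $\C^p(\calU,\F)$, and then dimension-shift along the finite resolution to reach acyclicity of $\Homab{\F}{\M}$. Applying Lemma~\ref{lemma.stov} to $\F$ itself, as you do, assumes away the hard part.

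Your second part is also incomplete, and harder than it needs to be. You reduce the assertion to a sheaf analogue of cotorsion periodicity \cite[Theorem~4.1]{BCE} but do not prove it; ``adapting the techniques'' of \cite{BCE}, \cite{EHs}, and \cite{JSt} is precisely the content that would have to be supplied. Moreover, the splicing into $0 \to W \to B \to W \to 0$ with $W=\prod_n\Cy[n]{\C}$ is already suspect: products in $\Qcoh{X}$ are not exact, so this spliced sequence need not be exact (closure of $\catCot{X}$ under products is fine; exactness is the problem), and your suggested workaround is not carried out. The paper sidesteps all of this: once the first assertion is established, an acyclic complex of cotorsion sheaves lies in $\catdgCot{X}$, and then \cite[Corollary~3.9]{JGl07}, applied to the complete hereditary cotorsion pair $(\FlatX,\catCot{X})$, immediately gives that its cycles are cotorsion, i.e.\ that the complex lies in $\catCotTilde$. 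I would encourage you to rework part one along the \v{C}ech-resolution reduction and to replace your periodicity scheme in part two by this formal consequence of part one.
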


\begin{proof}
  As $(\catFlatTilde,\catdgCot{X})$ is a cotorsion pair, see
  Remark~\ref{fc}, the first assertion is that for every complex $\M$
  of cotorsion sheaves and every $\F$ in $\catFlatTilde$ one has
  $\Ext{1}{\Ch{\Qcoh{X}}}{\F}{\M} = 0$. Fix $\F\in\catFlatTilde$ and a
  semi-separating open affine covering $\calU = \{U_0,\ldots,U_d\}$ of
  $X$. Consider the double complex of sheaves obtained by taking the
  \v Cech resolutions of each term in $\F$; see Murfet
  \cite[Section~3.1]{DMf-phd}. The rows of the double complex form a
  sequence in $\Ch{\Qcoh{X}}$:
  \begin{equation}
    \label{eq44}
    0 \lra \F \lra \C^0(\calU,\F) \lra \C^1(\calU,\F) \lra \cdots
    \lra \C^d(\calU,\F) \lra 0
  \end{equation}
  with
  \begin{equation*}
    \C^p(\calU,\F) =
    \bigoplus_{j_0<\cdots<j_p}i_*(\widetilde{\F({U_{j_0,\ldots,j_p}}}))\:,
  \end{equation*}
  where $j_0,\ldots,j_p$ belong to the set $\{0,\ldots,d\}$ and
  $i\colon\, U_{j_0,\ldots,j_p}\lra X$ is the inclusion of the open
  affine subset $U_{j_0,\ldots,j_p} = U_{j_0}\cap\ldots\cap U_{j_p}$
  of $X$. For a tuple of indices $j_0 < \cdots < j_p$, the complex
  $\F(U_{j_0,\ldots,j_p})$ is an acyclic complex of flat
  $\R(U_{j_0,\ldots,j_p})$-modules whose cycle modules are also
  flat. It follows that the complex $\F(U_{j_0,\ldots,j_p})$ is a
  direct limit
  \begin{equation*}
    \F(U_{j_0,\ldots,j_p}) =
    \varinjlim_{\lambda\in\Lambda} P_\lambda^{U_{j_0,\ldots,j_p}}
  \end{equation*}
  of contractible complexes of projective, hence flat,
  $\R(U_{j_0,\ldots,j_p})$-modules; see for example
  Neeman~\cite[Theorem~8.6]{ANm08}. The functor $i_*$ preserves split
  exact sequences, so
  $i_*(\widetilde{P_\lambda^{U_{j_0,\ldots,j_p}}})$ is for every
  $\lambda\in\Lambda$ a contractible complex of flat sheaves.  The
  functor also preserves direct limits, so $\C^p(\calU,\F)$ is a
  finite direct sum of direct limits of contractible complexes in
  $\catFlatTilde$, hence $\C^p(\calU,\F)$ is itself a direct limit of
  contractible complexes in $\catFlatTilde$. For every complex $\M$ of
  cotorsion sheaves and every $n\in\mathbb{Z}$, Lemma \ref{lemma.stov}
  now yields
  \begin{equation*}
    \H^n\Homab{\C^p(\calU,\F)}{\M} \is
    \H^1\Homab{\C^p(\calU,\F)}{\Sigma^{n-1}\M} = 0
    \quad\text{for}\quad 0 \leq p\leq d\:.
  \end{equation*}
  That is, the complex $\Homab{\mathscr{C}^p(\calU,\F)}{\M}$ is
  acyclic for every $0\leq p\leq d$ and every complex $\M$ of
  cotorsion sheaves. Applying $\Homab{-}{\M}$ to the exact sequence
  \begin{equation*}
    0 \lra \LL_{d-1}\lra\C^{d-1}(\calU,\F) \lra
    \C^d(\calU,\F) \lra 0\:,
  \end{equation*}
  one gets an exact sequence of complexes of abelian groups
  \begin{equation*}
    0 \lra \Homab{\C^d(\calU,\F)}{\M} \lra
    \Homab{\C^{d-1}(\calU,\F)}{\M}
    \lra \Homab{\LL_{d-1}}{\M} \lra 0\:.
  \end{equation*}
  The first two terms are acyclic, and hence so is
  $\Homab{\LL_{d-1}}{\M}$. Repeating this argument $d-1$ more times,
  one concludes that $\Homab{\F}{\M}$ is acyclic, whence one has
  $\Ext{1}{\Ch{\Qcoh{X}}}{\F}{\M} = 0$ per \eqref{eq:11a}.

  The second assertion now follows from \cite[Corollary~3.9]{JGl07}
  which applies as $(\FlatX,\catCot{X})$ is a complete hereditary
  cotorsion pair and $\FlatX$ contains a generator for $\Qcoh{X}$; see
  Remark~\ref{rmk:locfinrep}.
\end{proof}

%%%%%%%%%%%%%%%%%%%%%%%%%%%%%%%%%%%%%%%%%%%%%%%%%%%%%%%%%%%%%%%%%%%%%%

\section{The stable category of Gorenstein flat-cotorsion sheaves}

\noindent
In this last section, we give a description of the stable category
associated to the cotorsion pair of Gorenstein flat sheaves described
in Theorem~\ref{theorem.gfcotpair}. In particular, we prove Theorems A
and B from the introduction.

Here we use the symbol $\operatorname{hom}$ to denote the morphism
sets in $\Qcoh{X}$ as well as the induced functor to abelian groups.
Further, the tensor product on $\Qcoh{X}$ has a right adjoint functor
denoted ${\mathscr Hom}_{{\rm qc}}$; see for example
\cite[2.1]{DMfSSl11}.

We recall from \cite[Definition 1.1, Proposition 1.3, and Definition
2.1]{CETa}:

\begin{definition}
  \label{dfn:GFC}
  An acyclic complex $\F$ of flat-cotorsion sheaves on $X$ is called
  \emph{totally acyclic} if the complexes $\cathom{\C}{\F}$ and
  $\cathom{\F}{\C}$ are acyclic for every flat-cotorsion sheaf $\C$ on
  $X$.

  A sheaf $\M$ on $X$ is called \emph{Gorenstein flat-cotorsion} if
  there exists a totally acyclic complex $\F$ of flat-cotorsion
  sheaves on $X$ with $\M = \Cy[0]{\F}$. Denote by $\catGFC{X}$ the
  category of Gorenstein flat-cotorsion sheaves on $X$.\footnote{In
    \cite{CETa} this category is denoted $\mathsf{Gor_{FlatCot}}(A)$
    in the case of an affine scheme $X = \Spec{A}$.}
\end{definition}

We proceed to show that the sheaves defined in \ref{dfn:GFC} are
precisely the cotorsion Gorenstein flat sheaves, i.e.\ the sheaves
that are both cotorsion and Gorenstein flat.

The next result is analogous to \cite[Theorem 4.4]{CETa}.

\begin{proposition}
  \label{Ftac_thm}
  Assume that $X$ is semi-separated noetherian. An acyclic complex of
  flat-cotorsion sheaves on $X$ is totally acyclic if and only if it
  is \Ftac.
\end{proposition}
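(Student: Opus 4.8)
The plan is to transport the equivalence from the affine case \cite[Theorem 4.3]{CETa} to $X$ by checking both acyclicity conditions stalkwise. Since $X$ is semi-separated noetherian, Remark~\ref{rmk:Ftac} shows that an acyclic complex $\F$ of flat sheaves is \Ftac\ exactly when each stalk complex $\F_x$ is \Ftac\ over $\Rx$. When the terms of $\F$ are flat cotorsion, each $\F_x$ is a complex of flat cotorsion $\Rx$-modules: flatness localizes, and cotorsion passes to the stalks along the flat ring maps $\R(U)\to\Rx$. Thus \cite[Theorem 4.3]{CETa} applies pointwise and identifies \textbf{F}-total acyclicity of $\F$ with the condition that each $\F_x$ be totally acyclic over $\Rx$, i.e.\ that $\operatorname{Hom}_{\Rx}(\F_x,T)$ and $\operatorname{Hom}_{\Rx}(T,\F_x)$ be acyclic for every flat cotorsion $\Rx$-module $T$. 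What remains is to match this pointwise total acyclicity with the global conditions of Definition~\ref{dfn:GFC}, namely the acyclicity of $\cathom{\F}{\C}$ and $\cathom{\C}{\F}$ for every flat cotorsion sheaf $\C$ on $X$.

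For the condition $\cathom{\F}{\C}$ I would use the adjunction $(i_x)^{*}\dashv(i_x)_{*}$ along the flat map $i_x\colon\Spec(\Rx)\to X$, which yields a natural isomorphism of complexes $\cathom{\F}{(i_x)_{*}\widetilde T}\is\operatorname{Hom}_{\Rx}(\F_x,T)$. Combined with the structure theory of flat cotorsion sheaves---which, paralleling Enochs' affine classification, presents each such sheaf $\C$ through the pushforwards $(i_x)_{*}\widetilde{T_x}$ of flat cotorsion $\Rx$-modules---this lets me read off acyclicity of $\cathom{\F}{\C}$ for all $\C$ from the right-hand stalk condition, and conversely recover each stalk condition by choosing $\C$ supported at a single point. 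As an independent check, the decomposition $\I\is\bigoplus_x(i_x)_{*}J(x)^{(\Lambda_x)}$ of injective sheaves from the proof of Theorem~\ref{thm:equiv_Noetss}, the projection formula $(i_x)_{*}J(x)\otimes\F\is(i_x)_{*}(J(x)\otimes_{\Rx}\F_x)$, and the character duality $(-)^{+}=\operatorname{Hom}_{\ZZ}(-,\mathbb{Q}/\ZZ)$ relating $J(x)\otimes_{\Rx}\F_x$ to $\operatorname{Hom}_{\Rx}(\F_x,J(x)^{+})$ together convert \textbf{F}-total acyclicity directly into the pointwise right-hand $\operatorname{Hom}$ condition, replacing the appeal to \cite{CETa} for that implication.

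The hard part is the remaining condition, acyclicity of $\cathom{\C}{\F}$, where $\C$ lies in the contravariant slot so that neither the adjunction nor the projection formula applies, and where $\operatorname{hom}$ out of the infinite products appearing in the structure of $\C$ does not localize. Here I would instead invoke the Frobenius structure of Corollary~\ref{corollary.modelGflat}: the flat cotorsion sheaves are exactly the projective--injective objects of the exact category $\catCot X\cap\catGF X$, so the functor $\cathom{\C}{-}$ is exact on its conflations. Once $\F$ is \Ftac\ and its cycles are known to be cotorsion, each short exact sequence $0\to\Cy[n]{\F}\to\F^{n}\to\Cy[n+1]{\F}\to0$ has all three terms in $\catCot X\cap\catGF X$ and is therefore such a conflation; applying $\cathom{\C}{-}$ then forces $\cathom{\C}{\F}$ to be acyclic. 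The technical heart of the argument is thus twofold: establishing the structure theorem for flat cotorsion sheaves on a noetherian scheme, and verifying that the cycles of an \Ftac\ complex of flat cotorsion sheaves are again cotorsion. Everything else reduces to adjunction bookkeeping and the pointwise appeal to \cite[Theorem 4.3]{CETa}.
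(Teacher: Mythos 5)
Your reduction to stalks fails at its very first step: the claim that ``cotorsion passes to the stalks along the flat ring maps $\R(U)\to\Rx$'' is false, and this is exactly the subtlety the paper's global proof is designed to avoid. Cotorsion is not preserved by localization, even for flat cotorsion modules over noetherian rings. Concretely, let $X=\Spec{k[[x,y]]}$. The structure sheaf is flat cotorsion (a complete local noetherian ring is pure-injective, hence cotorsion, as a module over itself), but its stalk at the height-one prime $\mathfrak{p}=(x)$ is the discrete valuation ring $V=k[[x,y]]_{(x)}$, which is not complete and therefore has $\Ext{1}{V}{K}{V}\neq 0$ for $K$ its fraction field; thus $V$ is not cotorsion over itself. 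So the stalks $\F_x$ of an acyclic complex of flat cotorsion sheaves need not be complexes of flat \emph{cotorsion} $\Rx$-modules, \cite[Theorem 4.3]{CETa} cannot be invoked pointwise, and both your ``recover each stalk condition by choosing $\C$ supported at a single point'' step and the pointwise identification of total acyclicity collapse. A second, independent gap is the ``structure theorem'' presenting every flat cotorsion sheaf through pushforwards $(i_x)_*\widetilde{T_x}$ of flat cotorsion $\Rx$-modules: you correctly flag it as the technical heart of your argument, but you neither prove it nor point to a reference, and no such statement is available in the paper. (By contrast, the other item you leave ``to be verified''---that the cycles of an \Ftac\ complex of flat cotorsion sheaves are cotorsion---is precisely Theorem~\ref{acyc.cotor} and could simply be cited.)

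The paper's proof never localizes. It uses Murfet--Salarian's facts that $\Homqc{\I}{\E}$ is a flat cotorsion sheaf for $\I$ injective and $\E$ an injective cogenerator, and that every flat cotorsion sheaf $\C$ is a direct summand of such a sheaf; the single adjunction isomorphism $\cathom{\I\otimes\F}{\E}\cong\cathom{\F}{\Homqc{\I}{\E}}$ then handles both directions of the equivalence as far as the condition on $\cathom{\F}{\C}$ is concerned, with faithful injectivity of $\E$ converting acyclicity of $\cathom{\I\otimes\F}{\E}$ into acyclicity of $\I\otimes\F$. The condition on $\cathom{\C}{\F}$ is handled as in your last paragraph: by Theorem~\ref{acyc.cotor} the cycles $\Cy[n]{\F}$ are cotorsion, so $\Ext{1}{\Qcoh{X}}{\C}{\Cy[n]{\F}}=0$ for $\C$ flat, and splicing gives acyclicity; your detour through the Frobenius structure of Corollary~\ref{corollary.modelGflat} is correct but unnecessary. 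What is salvageable from your proposal is the character-duality mechanism: it is essentially the affine engine inside \cite[Theorem 4.3]{CETa}, but to run it over $X$ one must work with the global objects $\Homqc{\I}{\E}$ rather than with stalks of $\C$.
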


\begin{proof}
  Let $\F$ be a totally acyclic complex of flat-cotorsion sheaves. Let
  $\I$ be an injective sheaf and $\E$ an injective cogenerator in
  $\Qcoh{X}$. By \cite[Lemma 3.2]{DMfSSl11}, the sheaf
  $\Homqc{\I}{\E}$ is flat-cotorsion. The adjunction isomorphism
  \begin{align}
    \label{adj_FC}
    \cathom{\I\otimes \F}{\E}\cong \cathom{\F}{\Homqc{\I}{\E}}
  \end{align}
  along with faithful injectivity of $\E$ implies that $\I\otimes \F$
  is acyclic, hence $\F$ is \Ftac.

  For the converse, let $\F$ be an \Ftac\ complex of flat-cotorsion
  sheaves and $\C$ be a flat-cotorsion sheaf. Recall from
  \cite[Proposition 3.3]{DMfSSl11} that $\C$ is a direct summand of
  $\Homqc{\I}{\E}$ for some injective sheaf $\I$ and injective
  cogenerator $\E$. Thus \eqref{adj_FC} shows that $\cathom{\F}{\C}$
  is acyclic. Moreover, it follows from Theorem~\ref{acyc.cotor} that
  $\Cy[n]{\F}$ is cotorsion for every $n\in\ZZ$, so $\cathom{\C}{\F}$
  is acyclic.
\end{proof}

\begin{theorem}
  \label{equalities}
  Assume that $X$ is semi-separated noetherian. A sheaf on $X$ is
  Gorenstein flat-cotorsion if and only if it is cotorsion and
  Gorenstein flat; that is,
  \begin{equation*}
    \catGFC{X} = \catCot{X}\cap \catGF{X}\:.
  \end{equation*}
\end{theorem}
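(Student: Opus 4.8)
The plan is to prove the two inclusions separately; the inclusion $\catGFC{X}\subseteq\catCot{X}\cap\catGF{X}$ is quick, while the reverse inclusion carries the content. For ``$\subseteq$'' I would take a totally acyclic complex $\F$ of flat cotorsion sheaves with $\M=\Cy[0]{\F}$ and read off both properties from it. Viewing $\F$ as an acyclic complex of cotorsion sheaves, Theorem~\ref{acyc.cotor} places it in $\catCotTilde$, so every cycle---$\M$ among them---is cotorsion. Proposition~\ref{Ftac_thm} identifies $\F$ as \Ftac, and since flat cotorsion sheaves are in particular flat, $\F$ is then an \Ftac\ complex of flat sheaves; hence $\M$ is Gorenstein flat by Definition~\ref{dfn:GF}.

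For ``$\supseteq$'', given $\M\in\catCot{X}\cap\catGF{X}$, the goal is to exhibit $\M$ as a cycle of a totally acyclic complex of flat cotorsion sheaves. The naive route---starting from an \Ftac\ complex of flat sheaves with cycle $\M$ and replacing each term by a flat cotorsion sheaf via the cotorsion pair $(\FlatX,\catCot{X})$---runs into the difficulty of keeping the successive cycles simultaneously cotorsion and Gorenstein flat, and this is where I expect the real work to lie. I would sidestep it by invoking the Frobenius structure already established in Corollary~\ref{corollary.modelGflat}: the category $\catCot{X}\cap\catGF{X}$ is Frobenius with the flat cotorsion sheaves as its projective--injective objects, its conflations being the short exact sequences of $\Qcoh{X}$ with all terms in the category. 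Every object of a Frobenius category has a complete resolution by projective--injectives, so splicing the syzygy and cosyzygy sequences of $\M$ yields an acyclic complex $\F$ of flat cotorsion sheaves with $\Cy[0]{\F}=\M$ and every cycle $\Cy[n]{\F}$ lying in $\catCot{X}\cap\catGF{X}$.

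It remains to verify that this $\F$ is totally acyclic, i.e.\ that $\cathom{\C}{\F}$ and $\cathom{\F}{\C}$ are acyclic for each flat cotorsion sheaf $\C$. I would obtain this by applying $\Homab{\C}{-}$ and $\Homab{-}{\C}$ to the defining short exact sequences $0\to\Cy[n]{\F}\to\F^n\to\Cy[n+1]{\F}\to 0$ and splicing. The first is exact since each $\Cy[n]{\F}$ is cotorsion, giving $\Ext{1}{\Qcoh{X}}{\C}{\Cy[n]{\F}}=0$; the second is exact since $\C$ is flat cotorsion and so lies in $\FlatX\cap\catCot{X}=\catGF{X}\cap\catGF{X}^\perp\subseteq\catGF{X}^\perp$ by Lemma~\ref{lem.compatibility}, whence $\Ext{1}{\Qcoh{X}}{\Cy[n]{\F}}{\C}=0$ because $\Cy[n]{\F}$ is Gorenstein flat. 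Thus $\F$ is totally acyclic and $\M=\Cy[0]{\F}$ is Gorenstein flat cotorsion, completing the reverse inclusion. The crux is this second inclusion, and the key move is to draw the flat cotorsion terms and the cotorsion-and-Gorenstein-flat cycles from the Frobenius category of Corollary~\ref{corollary.modelGflat}, after which the two Ext-vanishing statements make total acyclicity routine.
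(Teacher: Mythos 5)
Your proposal is correct, and it takes a genuinely different route from the paper's proof of Theorem~\ref{equalities}. The paper handles the containment ``$\supseteq$'' at the level of chain complexes: starting from an \Ftac\ complex $\F$ of flat sheaves with $\M=\Cy[0]{\F}$, it uses the complete cotorsion pair $(\catFlatTilde,\catdgCot{X})$ of Remark~\ref{fc} to produce an exact sequence $0\to\F\to\T\to\P\to 0$ in $\Ch{\Qcoh{X}}$ in which $\T$ is an \Ftac\ complex of flat cotorsion sheaves with cotorsion cycles, and then finishes by citing the argument of \cite[Theorem 5.2]{CETa} verbatim (in essence: $\Ext{1}{\Qcoh{X}}{\Cy[0]{\P}}{\M}=0$ splits $\M$ off as a direct summand of the Gorenstein flat cotorsion sheaf $\Cy[0]{\T}$). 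You instead bypass complexes of flat sheaves entirely: you pull the complete resolution of $\M$ directly out of the Frobenius structure on $\catCot{X}\cap\catGF{X}$ from Corollary~\ref{corollary.modelGflat}, whose conflations are short exact sequences of $\Qcoh{X}$, and then verify total acyclicity by hand from the two orthogonality relations---$\Ext{1}{\Qcoh{X}}{\C}{\Cy[n]{\F}}=0$ because flat is left-orthogonal to cotorsion, and $\Ext{1}{\Qcoh{X}}{\Cy[n]{\F}}{\C}=0$ because $\FlatX\cap\catCot{X}\subseteq\catGF{X}^\perp$ by Lemma~\ref{lem.compatibility}. There is no circularity, since Corollary~\ref{corollary.modelGflat} (Section 2) is independent of Theorem~\ref{equalities}. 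What your route buys is self-containment: the reverse inclusion becomes a formal consequence of results already proved in this paper, with no appeal to the external argument in \cite{CETa}; it also makes transparent exactly which Ext-vanishings drive total acyclicity. What the paper's route buys is that the constructed complex is visibly \Ftac\ (inherited from $\F$ and $\P$), keeping the statement aligned with Proposition~\ref{Ftac_thm} and with the affine template of \cite{CETa}, whereas in your construction \textbf{F}-total acyclicity of the resolution is only recovered afterwards via Proposition~\ref{Ftac_thm}. Your forward inclusion is the same as the paper's (Theorem~\ref{acyc.cotor} plus Proposition~\ref{Ftac_thm}), just spelled out.
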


\begin{proof}
  The containment ``$\subseteq$'' is immediate by
  Theorem~\ref{acyc.cotor} and Proposition~\ref{Ftac_thm}.  For the
  reverse containment, let $\M$ be a cotorsion Gorenstein flat sheaf
  on $X$. There exists an \Ftac\ complex $\F$ of flat sheaves with
  $\M = \Cy[0]{\F}$. As $(\catFlatTilde,\catdgCot{X})$ is a complete
  cotorsion pair, see Remark~\ref{fc}, there is an exact sequence in
  $\Ch{\Qcoh{X}}$
  \begin{equation*}
    0 \lra \F \lra \T \lra \P \lra 0
  \end{equation*}
  with $\T \in \catdgCot{X}$ and $\P \in \catFlatTilde$. As $\F$ and
  $\P$ are \Ftac\, so is $\T$; in particular, $\Cy[n]{\T}$ is
  cotorsion for every $n\in\ZZ$; see Theorem~\ref{acyc.cotor}. The
  argument in \cite[Theorem 5.2]{CETa} now applies verbatim to finish
  the proof.
\end{proof}

\begin{remark}
  \label{stabcats}
  One upshot of Theorem \ref{equalities} is that the Frobenius
  category described in Corollary~\ref{corollary.modelGflat} coincides
  with the one associated to $\catGFC{X}$ per
  \cite[Theorem~2.11]{CETa}. In particular, the associated stable
  categories are equal. One of these is equivalent to the homotopy
  category of the Gorenstein flat model structure and the other is by
  \cite[Corollary 3.9]{CETa} and Proposition~\ref{Ftac_thm} equivalent
  to the homotopy category
  \begin{equation*}
    \catKFtac{\FlatX \cap \catCot{X}}
  \end{equation*}
  of \Ftac\ complexes of flat-cotorsion sheaves on $X$.
\end{remark}

In \cite[2.5]{DMfSSl11} the \emph{pure derived category of flat
  sheaves} on $X$ is the Verdier quotient
\begin{equation*}
  \catD{\FlatX} = \frac{\catK{\FlatX}}{\catKpure{\FlatX}}\:,
\end{equation*}
where $\catKpure{\FlatX}$ is the full subcategory of $\catK{\FlatX}$
of pure acyclic complexes; that is, the objects in $\catKpure{\FlatX}$
are precisely the objects in $\catFlatTilde$. Still following
\cite{DMfSSl11} we denote by $\catDFtac{\FlatX}$ the full subcategory
of $\catD{\FlatX}$ whose objects are \Ftac. As the category
$\catKpure{\FlatX}$ is contained in $\catKFtac{\FlatX}$, it can be
expressed as the Verdier quotient
\begin{equation*}
  \catDFtac{\FlatX} = \frac{\catKFtac{\FlatX}}{\catKpure{\FlatX}}\:.
\end{equation*}

\begin{theorem}
  \label{main_thm}
  Assume that $X$ is semi-separated noetherian.  The composite of
  canonical functors
  \begin{equation*}
    \catKFtac{\FlatX \cap \catCot{X}} \lra  \catKFtac{\FlatX}
    \lra \catDFtac{\FlatX}
  \end{equation*}
  is a triangulated equivalence of categories.
\end{theorem}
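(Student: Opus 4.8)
The plan is to show that the composite functor, call it $\Phi$, is both fully faithful and dense, and hence an equivalence. Write $\mathcal{T} = \catKFtac{\FlatX}$ for the ambient triangulated category and $\mathcal{S} = \catKpure{\FlatX}$ for the thick subcategory of pure acyclic complexes that is inverted in forming $\catDFtac{\FlatX} = \mathcal{T}/\mathcal{S}$; recall that $\mathcal{S}$ coincides with $\catFlatTilde$. The first functor in the composite is induced by the inclusion $\FlatX \cap \catCot{X} \hookrightarrow \FlatX$, and since homotopies between complexes of flat cotorsion sheaves involve only such sheaves, this functor is fully faithful; I may therefore regard $\mathcal{U} = \catKFtac{\FlatX \cap \catCot{X}}$ as a full triangulated subcategory of $\mathcal{T}$. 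The problem then reduces to two claims about the localization $\mathcal{T} \to \mathcal{T}/\mathcal{S}$ restricted to $\mathcal{U}$.

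For full faithfulness I would prove that $\mathcal{U}$ is right orthogonal to $\mathcal{S}$, that is, $\operatorname{Hom}_{\mathcal{T}}(\P,\Sigma^n\G) = 0$ for all $\P \in \mathcal{S}$, all $\G \in \mathcal{U}$, and all $n \in \ZZ$. This is exactly where Theorem~\ref{acyc.cotor} does the work: a complex of flat cotorsion sheaves is in particular a complex of cotorsion sheaves, so it lies in $\catdgCot{X}$, and by the defining property of $\catdgCot{X}$ in Remark~\ref{fc} the complex $\Homab{\P}{\G}$ is acyclic for every $\P \in \catFlatTilde = \mathcal{S}$. Since $\H^n\Homab{\P}{\G} \is \operatorname{Hom}_{\catK{\Qcoh{X}}}(\P,\Sigma^n\G)$, the orthogonality follows. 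Now the standard fact that, for a thick subcategory $\mathcal{S}$ with $\mathcal{U} \subseteq \mathcal{S}^\perp$, the localization is fully faithful on $\mathcal{U}$ applies: morphisms in $\mathcal{T}/\mathcal{S}$ are computed by roofs whose left leg has cone in $\mathcal{S}$, and right orthogonality makes those legs induce isomorphisms on $\operatorname{Hom}$-groups into objects of $\mathcal{U}$, so $\operatorname{Hom}_{\mathcal{T}/\mathcal{S}}(\G_1,\G_2) \is \operatorname{Hom}_{\mathcal{T}}(\G_1,\G_2)$ for $\G_1,\G_2 \in \mathcal{U}$.

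For density I would realize each object of $\catDFtac{\FlatX}$, represented by an \Ftac\ complex $\F$ of flat sheaves, by an object of $\mathcal{U}$. Completeness of the cotorsion pair $(\catFlatTilde,\catdgCot{X})$ from Remark~\ref{fc}, applied to $\F$, yields a short exact sequence $0 \lra \F \lra \T \lra \P \lra 0$ in $\Ch{\Qcoh{X}}$ with $\T \in \catdgCot{X}$ and $\P \in \catFlatTilde$. Exactly as in the proof of Theorem~\ref{equalities}, $\T$ is again \Ftac; moreover each term $\T^n$ is flat, being an extension of the flat sheaves $\F^n$ and $\P^n$, and since $\T \in \catdgCot{X}$ its terms are cotorsion, so $\T \in \mathcal{U}$. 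It remains to see that $\F$ and $\T$ have the same image in the quotient. Because each cokernel term $\P^n$ is flat, the sequence is degreewise pure, so it stays exact after applying $-\otimes\M$ for any sheaf $\M$; as $\P$ is pure acyclic, this forces the mapping cone of $\F \lra \T$ to remain acyclic after tensoring with every $\M$, whence it is pure acyclic and lies in $\mathcal{S}$. Thus $\F \is \T$ in $\catDFtac{\FlatX}$, and density follows.

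I expect the main obstacle to be the density step rather than full faithfulness, which is nearly immediate once Theorem~\ref{acyc.cotor} is invoked. Within density, the delicate point is to confirm that the single cotorsion-pair replacement $\T$ simultaneously has flat cotorsion terms, remains \Ftac, and differs from $\F$ only by a pure acyclic complex. The crux is the observation that flatness of the cokernel terms $\P^n$ makes the defining sequence degreewise pure: this is precisely what upgrades ``the cone is acyclic with flat terms'' to ``the cone is pure acyclic,'' a distinction that matters because an acyclic complex of flat sheaves need not lie in $\catFlatTilde$.
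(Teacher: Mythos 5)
Your proposal is correct and follows essentially the same route as the paper: the paper's proof invokes \cite[Theorem 5.6]{CETa} \emph{mutatis mutandis}, and that argument is exactly your fully-faithful-plus-dense scheme, with Theorem~\ref{acyc.cotor} supplying the orthogonality that makes the localization fully faithful on $\catKFtac{\FlatX \cap \catCot{X}}$, and completeness of the cotorsion pair $(\catFlatTilde,\catdgCot{X})$ supplying the special preenvelope $0 \to \F \to \T \to \P \to 0$ used for density. The only ingredient you use beyond what you cite is the identification of pure acyclic complexes of flat sheaves with the tensor-acyclic ones (equivalently, those with flat cycles), but this is exactly the paper's identification of the objects of $\catKpure{\FlatX}$ with those of $\catFlatTilde$, so your argument stands as written.
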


\begin{proof}
  In view of Theorem \ref{acyc.cotor} and the fact that
  $(\catFlatTilde,\catdgCot{X})$ is a complete cotorsion pair, see
  Remark~\ref{fc}, the proof of \cite[Theorem 5.6]{CETa} applies
  \emph{mutatis mutandis.}
\end{proof}

We denote by $\stabcatGFC{X}$ the stable category of Gorenstein
flat-cotorsion sheaves; cf.~Remark~\ref{stabcats}. Let $A$ be a
commutative noetherian ring of finite Krull dimension. For the affine
scheme $X = \Spec{A}$ this category is by \cite[Corollary~5.9]{CETa}
equivalent to the stable category $\mathsf{StGPrj}(A)$ of Gorenstein
projective $A$-modules. This, together with the next result, suggests
that $\stabcatGFC{X}$ is a natural non-affine analogue of
$\mathsf{StGPrj}(A)$. Indeed, the category $\catDFtac{\FlatX}$ is
Murfet and Salarian's non-affine analogue of the homotopy category of
totally acyclic complexes of projective modules; see
\cite[Lemma~4.22]{DMfSSl11}.

\begin{corollary}
  \label{cor.main}
  There is a triangulated equivalence of categories
  \begin{equation*}
    \stabcatGFC{X} \simeq \catDFtac{\FlatX}\:.
  \end{equation*}
\end{corollary}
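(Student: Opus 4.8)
The plan is to obtain the asserted equivalence as a composite of two equivalences already assembled in the paper, and then to verify that each respects the triangulated structures in play. First I would invoke Remark~\ref{stabcats}: by Theorem~\ref{equalities} the Frobenius category of Gorenstein flat cotorsion sheaves coincides with the Frobenius category appearing in Corollary~\ref{corollary.modelGflat}, so their stable categories agree. On one hand this common stable category is $\stabcatGFC{X}$; on the other hand, via \cite[Theorem~3.8]{CETa}, it is equivalent to the homotopy category $\catKFtac{\FlatX \cap \catCot{X}}$ of \Ftac\ complexes of flat cotorsion sheaves. Concretely, this is the step that passes from the ``object'' description of the stable category (cycles in totally acyclic complexes of flat cotorsion sheaves) to the ``complex'' description.

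Next I would apply Theorem~\ref{main_thm}, which identifies $\catKFtac{\FlatX \cap \catCot{X}}$ with $\catDFtac{\FlatX}$ through the composite of the natural inclusion and quotient functors $\catKFtac{\FlatX \cap \catCot{X}} \to \catKFtac{\FlatX} \to \catDFtac{\FlatX}$. Composing this with the equivalence from the previous paragraph yields $\stabcatGFC{X} \simeq \catDFtac{\FlatX}$, which is the desired identification.

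The only point that requires genuine care — and hence the main (mild) obstacle — is that the resulting equivalence must be an equivalence of \emph{triangulated} categories rather than merely of additive categories. Here I would argue as follows: the stable category of a Frobenius category carries its canonical triangulated structure, the suspension being given by cosyzygies, and the equivalence with $\catKFtac{\FlatX \cap \catCot{X}}$ supplied by \cite[Theorem~3.8]{CETa} is an equivalence of triangulated categories (the suspension on $\stabcatGFC{X}$ matches the shift of complexes); meanwhile the equivalence of Theorem~\ref{main_thm} is a composite of exact functors between triangulated categories, hence itself triangulated. Therefore the composite is an equivalence of triangulated categories, which completes the proof.
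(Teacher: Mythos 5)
Your proposal is correct and follows exactly the paper's own argument: combine the equivalence $\stabcatGFC{X} \simeq \catKFtac{\FlatX \cap \catCot{X}}$ from Remark~\ref{stabcats} with Theorem~\ref{main_thm}. Your additional care about the triangulated structure (Frobenius stable category versus homotopy/Verdier quotient categories) is a sound elaboration of a point the paper leaves implicit, but it does not change the route.
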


\begin{proof}
  Combine the equivalence
  $\stabcatGFC{X} \simeq \catKFtac{\FlatX \cap \catCot{X}}$ from
  Remark~\ref{stabcats} with Theorem \ref{main_thm}.
\end{proof}

We emphasize that Proposition~\ref{Ftac_thm} and
Theorem~\ref{main_thm} offer another equivalent of the category
$\stabcatGFC{X}$, namely the homotopy category of totally acyclic
complexes of flat-cotorsion sheaves.

A noetherian scheme $X$ is called Gorenstein if the local ring $\Rx$
is Gorenstein for every $x \in X$. We close this paper with a
characterization of Gorenstein schemes in terms of flat-cotorsion
sheaves, it sharpens \cite[Theorem~4.27]{DMfSSl11}.  In a paper in
progress \cite{CDEHLT} we show that Gorensteinness of a scheme $X$ can
be characterized  by the equivalence of the category
$\stabcatGFC{X}$ to a naturally defined singularity category.

\begin{theorem}
  \label{gorX}
  Assume that $X$ is semi-separated noetherian. The following
  conditions are equivalent.
  \begin{itemize}
  \item[$(i)$] $X$ is Gorenstein.
  \item[$(ii)$] Every acyclic complex of flat sheaves on $X$ is \Ftac.
  \item[$(iii)$] Every acyclic complex of flat-cotorsion sheaves on
    $X$ is \Ftac.
  \item[$(iv)$] Every acyclic complex of flat-cotorsion sheaves on $X$
    is totally acyclic.
  \end{itemize}
\end{theorem}

\begin{proof}
  The equivalence of conditions $(i)$ and $(ii)$ is \cite[Theorem
  4.27]{DMfSSl11}, and conditions $(iii)$ and $(iv)$ are equivalent by
  Proposition~\ref{Ftac_thm}. As $(ii)$ evidently implies $(iii)$, it
  suffices to argue the converse.

  Assume that every acyclic complex of flat-cotorsion sheaves is
  \Ftac. Let $\F$ be an acyclic complex of flat sheaves.  As
  $(\catFlatTilde,\catdgCot{X})$ is a complete cotorsion pair, see
  Remark~\ref{fc}, there is an exact sequence in $\Ch{\Qcoh{X}}$,
  \begin{equation*}
    0 \lra \F \lra \C \lra \P \lra 0\:,
  \end{equation*}
  with $\C \in \catdgCot{X}$ and $\P \in \catFlatTilde$. Since $\F$
  and $\P$ are acyclic, the complex $\C$ is also acyclic. Moreover,
  $\C$ is a complex of flat-cotorsion sheaves. By assumption $\C$ is
  \Ftac, and so is $\P$, whence it follows that $\F$ is \Ftac.
\end{proof}

\section*{Acknowledgment}

\noindent
We thank Alexander Sl\'avik for helping us correct a mistake in an
earlier version of the proof of Theorem \ref{thm:equiv_Noetss}. We
also acknowledge the anonymous referee's prompts to improve the
presentation.

  \providecommand{\MR}[1]{\mbox{\href{http://www.ams.org/mathscinet-getitem?mr=#1}{#1}}}
  \renewcommand{\MR}[1]{\mbox{\href{http://www.ams.org/mathscinet-getitem?mr=#1}{#1}}}
  \providecommand{\arxiv}[2][AC]{\mbox{\href{http://arxiv.org/abs/#2}{\sf
  arXiv:#2 [math.#1]}}} \def\cprime{$'$}
\providecommand{\bysame}{\leavevmode\hbox to3em{\hrulefill}\thinspace}
\providecommand{\MR}{\relax\ifhmode\unskip\space\fi MR }
% \MRhref is called by the amsart/book/proc definition of \MR.
\providecommand{\MRhref}[2]{%
  \href{http://www.ams.org/mathscinet-getitem?mr=#1}{#2}
}
\providecommand{\href}[2]{#2}

\end{document}